\newtheorem{theorem}{Theorem}[section]
\newtheorem{definition}[theorem]{Definition}
\newtheorem{remark}[theorem]{Remark}
\newtheorem{lemma}[theorem]{Lemma}
\newtheorem{corollary}[theorem]{Corollary}
\newcommand{\abs}[1]{\lvert#1\rvert}
\newcommand{\K}{\mathbb K}
\newcommand{\F}{\mathbb F}
\newcommand{\bu}{\mathbf u}
\newcommand{\cG}{\mathcal G}
\newcommand{\cX}{\mathcal X}
\newcommand{\cY}{\mathcal Y}
\newcommand{\cC}{\mathcal C}
\newcommand{\cH}{\mathcal H}
\newcommand{\cI}{I}
\newcommand{\ord}{\mathrm{ord}}
\newcommand{\PG}{\mathrm{PG}}
\def\zhou#1 {\fbox {\footnote {\ }}\ \footnotetext { From Yue: {\color{red}#1}}}
\def\daniele#1 {\fbox {\footnote {\ }}\ \footnotetext { From Daniele: {\color{blue}#1}}}
\begin{document}
\title{Exceptional scattered polynomials}
\author{Daniele Bartoli\textsuperscript{\,1}}
\author{Yue Zhou\textsuperscript{\,2,$\dagger$}}
\address{\textsuperscript{1}Department of Mathematics and Computer Science, University of Perugia, 06123 Perugia, Italy}
\email{daniele.bartoli@unipg.it}
\address{\textsuperscript{2}College of Science, National University of Defense Technology, 410073 Changsha, China}
\address{\textsuperscript{$\dagger$}Corresponding Author}
\email{yue.zhou.ovgu@gmail.com}
\keywords{maximum scattered linear set; MRD code; algebraic curve; Hasse-Weil bound}
\begin{abstract}
Let $f$ be an $\F_q$-linear function over $\F_{q^n}$. If the $\F_q$-subspace $U= \{ (x^{q^t}, f(x)) : x\in \F_{q^n}  \}$ defines a maximum scattered linear set, then we call $f$ a scattered polynomial of index $t$. As these polynomials appear to be very rare, it is natural to look for some classification of them. We say a function $f$ is an exceptional scattered polynomial of index $t$ if the subspace $U$ associated with $f$ defines a maximum scattered linear set in $\PG(1, q^{mn})$ for infinitely many $m$. Our main results are the {complete} classifications of exceptional scattered monic polynomials of index $0$ (for $q>5$) and of index $1$. The strategy applied here is to convert the original question into a special type of algebraic curves and then to use the intersection theory and the Hasse-Weil theorem to derive contradictions.
\end{abstract}
\maketitle
\section{Introduction}
Let $q$ be a prime power and $r,n\in \mathbb{N}$. Let $V$ be a vector space of dimension $r$ over $\F_{q^n}$. For any $k$-dimensional $\F_q$-vector subspace $U$ of $V$, the set $L(U)$ defined by the nonzero vectors of $U$ is called an $\F_q$-\emph{linear set} of $\Lambda=\PG(V, q^n)$ of \emph{rank} $k$, i.e.
\[ L(U)=\{\langle \bu \rangle_{\F_{q^n}}: \bu \in U\setminus \{\mathbf{0} \}  \}.\]
It is notable that the same linear set can be defined by different vector subspaces. Consequently, we always consider a linear set and the vector subspace defining it in pair. 

Let $\Omega=\PG(W,\F_{q^n})$ be a subspace of $\Lambda$ and $L(U)$ an $\F_q$-linear set of $\Lambda$. We say that $\Omega$ has \emph{weight} $i$ in $L(U)$ if $\dim_{\F_q}(W\cap U)=i$. Thus a point of $\Lambda$ belongs to $L(U)$ if and only if it has weight at least $1$. Moreover, for any $\F_q$-linear set $L(U)$ of rank $k$, 
\[|L_U|\leq \frac{q^{k}-1}{q-1}.\]
When the equality holds, i.e.\ all the points of $L(U)$ have weight $1$, we say $L(U)$ is \emph{scattered}. A scattered $\F_q$-linear set of highest possible rank is called a \emph{maximum scattered} $\F_q$-\emph{linear set}. See \cite{blokhuis_scattered_2000} for the possible ranks of maximum scattered linear sets.

Maximum scattered linear sets have various applications in Galois geometry, including blocking sets \cite{ball_linear_2000,lunardon_linear_k-blocking_2001,lunardon_blocking_2000}, two-intersection sets \cite{blokhuis_scattered_2000,blokhuis_two-intersection_2002}, finite semifields \cite{cardinali_semifield_2006,ebert_infinite_2009,lunardon_maximum_scattered_2014,marino_towards_2011}, translation caps \cite{bartoli_maximum_2017}, translation hyperovals \cite{durante_hyperovals_2017}, etc. For more applications and related topics, see \cite{polverino_linear_2010} and the references therein. For recent surveys on linear sets and particularly on the theory of scattered spaces, see \cite{lavrauw_scattered_2016,lavrauw_field_reduction_2015}.

In this paper, we are interested in maximum scattered linear sets in $\PG(1, q^n)$. Let $f$ be an $\F_q$-linear function over $\F_{q^n}$ and
\begin{equation}\label{eq:U_(x,f)}
	U= \{ (x, f(x)) : x\in \F_{q^n}  \}.
\end{equation}
Clearly $U$ is an $n$-dimensional $\F_q$-subspace of $\F_{q^n}$ and $f$ can be written as a $q$-polynomial $f(X)=\sum a_i X^{q^i} \in \F_{q^n}[X]$. It is not difficult to show that a necessary and sufficient condition for $L(U)$ to define a maximum scattered linear set in $\PG(1, q^n)$ is
\begin{equation}\label{eq:scattered_poly}
	\frac{f(x)}{x} = \frac{f(y)}{y} ~\text{ if and only if }~ \frac{y}{x}\in \F_q, \quad\text{ for }x,y\in \F_{q^n}^*.
\end{equation}
In \cite{sheekey_new_2016}, such a $q$-polynomial is called a \emph{scattered polynomial}.

Two linear sets $L(U)$ and $L(U')$ in $\PG(2,q^n)$ are \emph{equivalent} if there exists an element of $\mathrm{P\Gamma L}(2, q^n)$ mapping $L(U)$ to $L(U')$. It is obvious that if $U$ and $U'$ are equivalent as $\F_{q^n}$-spaces, then $L(U)$ and $L(U')$ are equivalent. However, the converse is not true in general. For recent results on the equivalence and classification of linear sets, we refer to \cite{csajbok_classes_2017,csajbok_maximum_2017,csajbok_equivalence_2016}.

There is a very interesting link between maximum scattered linear sets and the so called maximum rank distance (MRD for short) codes \cite{csajbok_maximum_2017}. In particular, a scattered polynomial over $\F_{q^n}$ defines an MRD code in $\F_q^{n\times n}$ of minimum distance $n-1$.

Let us give a brief introduction to MRD codes. The \emph{rank metric} on the $\F_q$-vector space $\F_q^{m\times n}$ is defined by
\[
d(A,B)=\mathrm{rank}(A-B) \,\, \text{for} \,\, A,B\in \F_q^{m\times n}.
\]
We call a subset of $\F_q^{m\times n}$ equipped with the rank metric a \emph{rank-metric code}. For a rank-metric code $\cC$ containing at least two elements, its \emph{minimum distance} is given by
$$d(\cC)=\min_{A,B\in \cC, A\neq B} d(A,B).$$ 
When $\cC$ is an $\F_q$-subspace of $\F_q^{m\times n}$, we say that $\cC$ is an \emph{$\F_q$-linear} code of dimension $\dim_{\F_q}(\cC)$. Under the assumption that $m\leq n$, it is well known (and easily verified) that every rank-metric code $\cC$ in $\F_q^{m\times n}$ with minimum distance $d$ satisfies
\[
\abs{\cC}\le q^{n(m-d+1)}.
\]
In case of equality, $\cC$ is called a \emph{maximum} rank-metric code, or \emph{MRD code} for short. MRD codes have been studied since the 1970s and have seen much interest in recent years due to an important application in the construction of error-correcting codes for random linear network coding~\cite{koetter_coding_2008}.

When the minimum distance $d=m=n$, an MRD code in $\F_q^{m\times n}$ is exactly a spreadset which is equivalent to a finite (pre)quasifield of order $q^n$. Many essentially different families of finite quasifields are known \cite{johnson_handbook_2007}, which yield many inequivalent MRD codes in $\F_q^{n\times n}$ with minimum distance $n$. In contrast, it appears to be much more difficult to obtain inequivalent MRD codes in $\F_q^{m\times n}$ with minimum distance strictly less than $n$ (recall that we assume $m\le n$).

A canonical construction of MRD codes of any minimum distance $d$ was given by Delsarte~\cite{delsarte_bilinear_1978}. This construction was rediscovered by Gabidulin~\cite{gabidulin_MRD_1985} and later generalized by Kshevetskiy and Gabidulin~\cite{kshevetskiy_new_2005}. Today it is customary to call the codes in this generalized family the \emph{Gabidulin codes}.

In recent years, an increased interest emerged concerning the question as to whether Gabidulin codes are unique at least for certain parameter sets, or if not, what other constructions can be found. Partial answers were given recently by Horlemann-Trautmann and Marshall~\cite{horlemann-trautmann_new_2015}, who showed indeed that Gabidulin codes are unique among $\F_q$-linear MRD codes for certain parameters. On the other hand there are several recent constructions of MRD codes, which were proved to be inequivalent to Gabidulin codes~\cite{cossidente_non-linear_2016,csajbok_maximum_2017,durante_nonlinear_MRD_2017,horlemann-trautmann_new_2015,lunardon_generalized_2015,neri_genericity_2017,sheekey_new_2016}. Very recently in \cite{schmidt_number_MRD_2017}, it was showed that the family of Gabidulin codes in $\F_q^{m\times n}$ already contains a huge subset of pairwise inequivalent MRD codes, provided that $2\le m\le n-2$.

However, for $m=n$ and $d<n$, there are only a few known constructions of MRD codes including Gabidulin codes \cite{delsarte_bilinear_1978,gabidulin_MRD_1985,kshevetskiy_new_2005}, twisted Gabidulin codes \cite{sheekey_new_2016}, the nonlinear MRD codes found by Cossidente, Marino and Pavese \cite{cossidente_non-linear_2016} with minimum distance $2$ which were later generalized by Durante and Siciliano \cite{durante_nonlinear_MRD_2016}, and the additive MRD codes obtained by Otal and \"Ozbudak \cite{otal_additive_2016}.

In particular, given a scattered polynomial $f$ over $\F_{q^n}$, an MRD codes can be defined by the following set of $\F_q$-linear maps
\begin{equation}\label{eq:MRD_scattered}
	 C_f :=\{ax+bf(x) : a, b\in \F_{q^n}\}. 
\end{equation}
To show that \eqref{eq:MRD_scattered} defines an MRD codes, we only have to prove that $ax+bf(x)$ has at most $q$ roots for each $a,b\in \F_{q^n}$ with $ab\neq 0$, which is equivalent to \eqref{eq:scattered_poly}.

It is worth pointing out that the MRD code defined by \eqref{eq:MRD_scattered} is $\F_{q^n}$-linear. Using the terminology in \cite{lunardon_kernels_2016}, one of its nuclei is $\F_{q^n}$. The equivalence problem of $\F_{q^n}$-linear MRD codes is slightly easier to handle compared with other MRD codes; see \cite{morrison_equivalence_2014}. It can be easily proved that for two given scattered polynomials $f$ and $g$, if they define two equivalent MRD codes, then the two associated maximum scattered linear sets are also equivalent. However the converse statement is not true in general; see \cite{csajbok_classes_2017,sheekey_new_2016}.

To the best of our knowledge, up to the equivalence of the associated MRD codes, all constructions of scattered polynomials for arbitrary $n$ can be summarized as one family
\begin{equation}\label{eq:the_one}
	f(x)=\delta x^{q^s} + x^{q^{n-s}},
\end{equation}
where $s$ satisfies $\gcd(s,n)=1$ and $\mathrm{Norm}_{\F_{q^n}/\F_q}(\delta)=\delta^{(q^n-1)/(q-1)}\neq 1$. 

When $\delta=0$ and $n-s=1$, $f$ defines the maximum scattered $\F_q$-linear set in $\PG(1, q^n)$ found by Blokhuis and Lavrauw \cite{blokhuis_scattered_2000}. In fact, no matter which value $s$ takes, $f(x)=x^{q^s}$ defines the same maximum scattered $\F_q$-linear set. However, the MRD codes associated with $x^{q^s}$ and $x^{q^t}$ are inequivalent if and only if $s\not\equiv \pm t \pmod{n}$.

When $\delta\neq 0$, $f$ defines the MRD codes constructed by Sheekey in \cite{sheekey_new_2016} and the equivalence problem was completely solved in \cite{lunardon_generalized_2015}. In particular, when $s=1$, the associated maximum scattered $\F_q$-linear set in $\PG(1, q^n)$ was found  by Lunardon and Polverino \cite{lunardon_blocking_2001}. In \cite{csajbok_classes_2017}, it is claimed that for different $s$ the associated linear sets can be inequivalent.

Besides the family of scattered polynomials defined in \eqref{eq:the_one}, very recently, Csajb\'ok, Marino, Polverino and Zanella find another new family of MRD codes which are of the form 
\begin{equation}\label{eq:the_two}
f(x)=\delta x^{q^s} + x^{q^{n/2+s}},
\end{equation}
for $n=6, 8$ and some $\delta\in\F_{q^n}^*$; see \cite{csajbok_newMRD_2017}.

As scattered polynomials appear to be very rare, it is natural to look for some classification of them. Given an integer $0\le t\le n-1$ and a $q$-polynomial $f$ whose coefficients are in $\F_{q^n}$, if 
\begin{equation}\label{eq:main_target}
	U_m= \{(x^{q^t}, f(x)): x\in \F_{q^{mn}}  \}
\end{equation}
defines a maximum scattered linear set in $\PG(1,q^{mn})$ for infinitely many $m$, then we call $f$ an \emph{exceptional scattered polynomial of index} $t$. {In particular, if $U_1$ is maximum scattered, then we say $f$ is a scattered polynomial over $\F_{q^n}$ of index $t$. } 

In \eqref{eq:main_target}, our linear set is slightly different from the one defined by \eqref{eq:U_(x,f)}, because we want to view the unique known family \eqref{eq:the_one} as exceptional ones: Taking $t=s$, from \eqref{eq:the_one} we get
\[\{(x^{q^s},x + \delta x^{q^{2s}} ): x\in \F_{q^{mn}}   \}  \]
which defines a maximum scattered linear set for all $mn$ satisfying $\gcd(mn,s)=1$. This means $x + \delta x^{q^{2s}}$ is exceptional of index $s$.

Assume that $U_m$ given by \eqref{eq:main_target} defines a maximum scattered linear set for some $m$. Now we want to normalize our research objects to exclude some obvious cases.
\begin{itemize}
	\item Without loss of generality, we assume that the coefficient of $X^{q^t}$ in $f(X)$ is always $0$.
	\item When $t>0$, we assume that the coefficient of $X$ in $f(X)$ is nonzero; otherwise let $t_0=\min\{i: a_i\neq 0\}$ and it is equivalent to consider
	\[ \left\{\left(x^{q^{t-t_0}}, \sum_{i=t_0}^{n-1}a_i^{q^{n-t_0}}x^{q^{i-t_0}}\right): x\in \F_{q^{mn}}  \right\} \]
	instead of $U_m$.
	\item We assume that $f(X)$ is monic.
\end{itemize}

After the normalization mentioned above, the main results concerning exceptional scattered polynomials in this paper are as follows.
\begin{enumerate}
	\item For $q>5$, $X^{q^k}$ is the unique exceptional scattered monic polynomial of index $0$.
	\item The only exceptional scattered monic polynomials $f$ of index $1$ over $\F_{q^n}$ are $X$ and $bX + X^{q^2}$ where $b\in \F_{q^n}$ satisfying $\mathrm{Norm}_{q^n/q}(b)\neq 1$. In particular, when $q=2$, $f(X)$ must be $X$.
\end{enumerate}

{To prove these results, the brief idea  is to convert the original question into a special type of algebraic curves and then to use some classical approaches such as the intersection theory and the Hasse-Weil theorem to get contradictions.}

There are several famous functions defined over finite fields which are also quite rare and similar classification of them have been obtained. For instance, to classify functions on $\F_{p^n}$ that are almost perfect nonlinear for infinitely many $n$, in particular for monomial functions, Janwa, McGuire and Wilson \cite{janwa_double-error-correcting_1995} proposed to use ideas form algebraic geometry. Later these ideas were developed by Jedlicka \cite{jedlicka_apn_2007} and Hernando and McGuire \cite{hernando_proof_2011}. The same approach has been applied in \cite{hernando_proof_2012} to prove a conjecture on monomial hyperovals and in \cite{leducq_functions_PN_2015} to get partial results towards the classification of monomial planar functions for infinitely many $n$, which was later completely solved by Zieve~\cite{zieve_planar_2015} by using the classification of indecomposable exceptional (permutation) polynomials. Similar results and approaches can also be found in \cite{caullery_large_class_2014,caullery_classification_2015,caullery_exceptional_2016,rodier_functions_APN_2011,schmidt_planar_2014}.

In this paper, we follow the main idea of the algebraic curve approach applied in the references listed above. However, to obtain a better estimation of the intersection number of singular points of two curves, our approach requires the use of branches and the local quadratic transformations of a plane curve. As far as we know, these tools have not been applied in the previous works for classifying polynomials over finite fields.

\section{Preliminaries}
For a given integer $0\le t\le n-1$ and a $q$-polynomial $f(X)=\sum_{i=0}^{n-1}a_iX^{q^i}$ whose coefficients are in $\F_{q^n}$, let
\begin{equation}\label{eq:U_(xt,f)}
U= \{(x^{q^t}, f(x)): x\in \F_{q^{n}}\}.
\end{equation}

It is easy to show that one necessary and sufficient condition for \eqref{eq:U_(xt,f)} defining a maximum scattered $\F_q$-linear set in $\PG(1,q^n)$ is
\[\frac{f(x)}{x^{q^t}} = \frac{f(y)}{y^{q^t}} ~\text{ if and only if }~ \frac{y}{x}\in \F_q, \quad\text{ for }x,y\in \F_{q^n}^*,\]
from which the following lemma follows.
\begin{lemma}\label{le:link}
	The vector space $U= \{(x^{q^t}, f(x)): x\in \F_{q^{n}}\}$ defines a maximum scattered linear set $L(U)$ in $\PG(1, q^n)$ if and only if the curve defined by
	\begin{equation}\label{eq:curve_condition}
	\frac{f(X)Y^{q^t} - f(Y)X^{q^t}}{X^qY-XY^q}
	\end{equation}
	in $\PG(2,q^n)$ contains no affine point $(x,y)$ such that $\frac{y}{x}\notin\F_{q}$.
\end{lemma}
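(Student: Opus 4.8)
The plan is to unwind the necessary-and-sufficient condition recalled just before the lemma: $L(U)$ with $U=\{(x^{q^t},f(x)):x\in\F_{q^n}\}$ is maximum scattered if and only if, for all $x,y\in\F_{q^n}^*$, the equality $f(x)y^{q^t}=f(y)x^{q^t}$ forces $y/x\in\F_q$. Put $F(X,Y):=f(X)Y^{q^t}-f(Y)X^{q^t}$, the numerator of the rational function in the statement; then $f(x)y^{q^t}=f(y)x^{q^t}$ is exactly $F(x,y)=0$, so the whole task is to show that replacing $F$ by $F/(X^qY-XY^q)$ deletes precisely the ``trivial'' solutions, namely those with $y/x\in\F_q$.

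First I would describe this trivial locus. Up to a nonzero scalar, $X^qY-XY^q$ is the product of the $q+1$ linear forms $X$ and $Y-cX$ ($c\in\F_q$), whose zero loci are the $q+1$ lines through the origin on which $x=0$ or $y/x\in\F_q$. On each of these lines $F$ vanishes identically: on $Y=cX$ one has $F(X,cX)=f(X)(cX)^{q^t}-f(cX)X^{q^t}=(c^{q^t}-c)\,f(X)X^{q^t}=0$, since $c\in\F_q$ gives $c^{q^t}=c$ and $f$ is $\F_q$-linear, while on $X=0$ one has $F(0,Y)=f(0)Y^{q^t}=0$. As these $q+1$ lines are distinct, the corresponding linear forms are pairwise coprime; each divides $F$, hence so does their product, i.e.\ $X^qY-XY^q$ divides $F$ in $\F_{q^n}[X,Y]$. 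Therefore $g(X,Y):=F(X,Y)/(X^qY-XY^q)$ is a genuine polynomial, nonzero as soon as $f(X)\neq a_tX^{q^t}$ (which holds under the normalization of the Introduction), and $g=0$ defines a plane curve $\cC$ in $\PG(2,q^n)$.

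It remains to match affine points. Let $(x,y)$ be an affine point with $x,y\in\F_{q^n}$ lying on none of the $q+1$ trivial lines, that is, $x\neq0$, $y\neq0$ and $y/x\notin\F_q$; this is the intended reading of ``affine point with $y/x\notin\F_q$''. Then $X^qY-XY^q$ does not vanish at $(x,y)$, so, by the factorization $F=g\cdot(X^qY-XY^q)$, we get $g(x,y)=0$ if and only if $F(x,y)=0$, i.e.\ if and only if $f(x)/x^{q^t}=f(y)/y^{q^t}$. Consequently $\cC$ has no affine point $(x,y)$ with $y/x\notin\F_q$ if and only if there is no pair $x,y\in\F_{q^n}^*$ with $y/x\notin\F_q$ and $f(x)/x^{q^t}=f(y)/y^{q^t}$, which is exactly the condition that $L(U)$ be maximum scattered; both implications come out together. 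The single point that needs care is the divisibility of $F$ by the full form $X^qY-XY^q$: one must check that $F$ vanishes on each of the $q+1$ directions as a polynomial identity, not merely at $\F_{q^n}$-rational points, and it is precisely $f(0)=0$ together with $c^{q^t}=c$ for $c\in\F_q$ that makes this go through.
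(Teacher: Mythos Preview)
Your proposal is correct and follows the same route as the paper, which does not actually give a written proof: it only records the scattered condition $\frac{f(x)}{x^{q^t}}=\frac{f(y)}{y^{q^t}}\iff \frac{y}{x}\in\F_q$ and then states that the lemma follows. You have simply filled in the omitted steps, namely the divisibility of $f(X)Y^{q^t}-f(Y)X^{q^t}$ by $X^qY-XY^q$ via its vanishing on the $q+1$ trivial lines, and the equivalence $g(x,y)=0\iff F(x,y)=0$ off those lines.
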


By Lemma \ref{le:link}, to prove a polynomial $f$ does not define a maximum scattered linear set by \eqref{eq:U_(xt,f)}, we only have to show that the curve defined by \eqref{eq:curve_condition} has at least one affine point $(x,y)$ such that $y/x\in \F_{q^n}\setminus \F_q$. To investigate these algebraic curves, we need to introduce several important concepts and results.

Let $F$ be a polynomial defining an affine plane curve $\mathcal{F}$, let $P=(u,v)$ be a point in the plane, and write
\[
F(X+u,Y+v)=F_0(X,Y)+F_1(X,Y)+F_2(X,Y)+\cdots,
\]
where $F_i$ is either zero or homogeneous of degree $i$. The \emph{multiplicity} of $\mathcal{F}$ at $P$, written as $m_P(\mathcal{F})$, is the smallest integer $m$ such that $F_m\ne 0$ and $F_i=0$ for $i<m$; the polynomial $F_m$ is the \emph{tangent cone} of $\mathcal{F}$ at $P$. A divisor of the tangent cone is called a \emph{tangent} of $\mathcal{F}$ at $P$. The point $P$ is on the curve $\mathcal{F}$ if and only if $m_P(\mathcal{F})\ge 1$. If $P$ is on $\mathcal{F}$, then $P$ is a \emph{simple} point of $\mathcal{F}$ if $m_P(F)=1$, otherwise $P$ is a \emph{singular} point of $\mathcal{F}$. It is possible to define in a similar way the multiplicity of an ideal point of $\mathcal{F}$, that is a point of the curve lying on the line at infinity.

Given two plane curves $\mathcal{A}$ and $\mathcal{B}$ and a point $P$ on the plane, the \emph{intersection number} $I(P, \mathcal{A} \cap \mathcal{B})$ of $\mathcal{A}$ and $\mathcal{B}$ at the point $P$ is defined by seven axioms. We do not include its precise and long definition here. For more details, we refer to \cite{fulton_algebraic_1989} and \cite{hirschfeld_curves_book_2008} in which the intersection number is defined equivalently in terms of local rings and in terms of resultants, respectively.

For intersection number, we need the following two classical results which can be found in most of the textbooks on algebraic curves.
\begin{lemma}\label{le:ordinary_singular}
	Let $\mathcal{A}$ and $\mathcal{B}$ be two plane curves and let $A$ and $B$ be the polynomials associated with $\mathcal{A}$ and $\mathcal{B}$ respectively.  For any affine point $P$, the intersection number satisfies the inequality
	\[ I(P, \mathcal{A}\cap \mathcal{B})\ge m_P(A) m_P(B), \]
	with equality if and only if the tangents at $P$ to $\mathcal{A}$ are all distinct from the tangents at $P$ to $\mathcal{B}$.
\end{lemma}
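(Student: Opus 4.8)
The plan is to reduce the statement to linear algebra in the local ring at $P$ and then run the classical dimension count. After a translation I may assume $P$ is the origin $O$; write $m=m_P(\mathcal{A})$, $n=m_P(\mathcal{B})$, and regard $A,B$ as polynomials in $K[X,Y]$ with lowest-degree forms $A_m$, $B_n$, the tangent cones at $P$. Over the algebraic closure, ``no common tangent'' translates to $\gcd(A_m,B_n)=1$. Using the local-ring description $I(P,\mathcal{A}\cap\mathcal{B})=\dim_K \mathcal{O}/(A,B)$ (see \cite{fulton_algebraic_1989}), where $\mathcal{O}=\mathcal{O}_P(\mathbb{A}^2)$ has maximal ideal $\mathfrak{m}$, the whole assertion becomes a statement about this dimension. (If $\mathcal{A}$ and $\mathcal{B}$ share a component through $P$ then $I(P,\mathcal{A}\cap\mathcal{B})=\infty$ and the common component is tangent to both curves, so the claimed equivalence is consistent; hence I may assume no shared component through $P$, in which case $\dim_K\mathcal{O}/(A,B)<\infty$.)

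For the inequality, the engine is the exact sequence of finite-dimensional $K$-vector spaces
\[
\frac{K[X,Y]}{\mathfrak{M}^{n}}\ \oplus\ \frac{K[X,Y]}{\mathfrak{M}^{m}}\ \xrightarrow{\ \psi\ }\ \frac{K[X,Y]}{\mathfrak{M}^{m+n}}\ \longrightarrow\ \frac{K[X,Y]}{(\mathfrak{M}^{m+n},A,B)}\ \longrightarrow\ 0,
\]
where $\mathfrak{M}=(X,Y)$ and $\psi(\overline{C},\overline{D})=\overline{CA+DB}$; this is well defined because $A\in\mathfrak{M}^{m}$ and $B\in\mathfrak{M}^{n}$, and exactness in the middle comes from reducing the cofactors of a general element of $(A,B)$ modulo $\mathfrak{M}^{n}$ and $\mathfrak{M}^{m}$. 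Since $\dim_K K[X,Y]/\mathfrak{M}^{d}=\binom{d+1}{2}$ and $\binom{m+n+1}{2}-\binom{n+1}{2}-\binom{m+1}{2}=mn$, exactness forces $\dim_K K[X,Y]/(\mathfrak{M}^{m+n},A,B)\ge mn$; and because $\mathfrak{M}^{m+n}$ is supported only at $O$, this last quotient equals $\mathcal{O}/(\mathfrak{m}^{m+n},A,B)$, a quotient of $\mathcal{O}/(A,B)$. Hence $I(P,\mathcal{A}\cap\mathcal{B})\ge mn=m_P(\mathcal{A})\,m_P(\mathcal{B})$, with equality only if $\psi$ is injective.

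For the ``if and only if'' I would treat the two directions separately. If $\mathcal{A}$ and $\mathcal{B}$ share a tangent $L$ at $P$, write $A_m=L\widehat{A}$, $B_n=L\widehat{B}$; then $(\overline{\widehat{B}},-\overline{\widehat{A}})$ is a nonzero element of $\ker\psi$, since the degree-$(m+n-1)$ part of $\widehat{B}A-\widehat{A}B$ equals $\widehat{B}A_m-\widehat{A}B_n=L(\widehat{B}\widehat{A}-\widehat{A}\widehat{B})=0$; so the bound above is strict and $I(P,\mathcal{A}\cap\mathcal{B})>mn$. Conversely, suppose $\gcd(A_m,B_n)=1$. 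Then $\psi$ is injective: comparing lowest-degree forms of $CA+DB$, a nonzero kernel element would produce either a nonzero binary form of degree $<m+n$ lying in $\mathfrak{M}^{m+n}$, or an identity $C_rA_m=-D_sB_n$ with $0\ne C_r$ of degree $<\deg B_n$ — both impossible, the second by coprimality and unique factorization of binary forms; hence $\dim_K K[X,Y]/(\mathfrak{M}^{m+n},A,B)=mn$. Moreover $\mathfrak{m}^{m+n-1}\subseteq(A,B)\mathcal{O}$: for every $t\ge m+n-1$ the forms $\{CA_m+DB_n:\deg C=t-m,\ \deg D=t-n\}$ fill the whole $(t+1)$-dimensional space of degree-$t$ binary forms (a count using that the two summands have dimensions $t-m+1$ and $t-n+1$ and meet only in multiples of $A_mB_n$, by coprimality), and since $CA\equiv CA_m$, $DB\equiv DB_n\pmod{\mathfrak{M}^{t+1}}$ this gives $\mathfrak{M}^{t}\subseteq(A,B)+\mathfrak{M}^{t+1}$; passing to $\mathcal{O}$ and applying Nakayama to the finitely generated ideal $\mathfrak{m}^{m+n-1}$ of the local ring $\mathcal{O}/(A,B)$ yields the containment. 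Combining the two facts, $\mathcal{O}/(A,B)=\mathcal{O}/(\mathfrak{m}^{m+n},A,B)=K[X,Y]/(\mathfrak{M}^{m+n},A,B)$ has dimension exactly $mn$, which completes the equivalence.

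The main obstacle is the ``equality'' half, and within it the implication $\gcd(A_m,B_n)=1\Rightarrow\mathfrak{m}^{m+n-1}\subseteq(A,B)\mathcal{O}$: one must transport information from the graded level, where coprimality of the tangent cones is transparent, down to the local ring, and it is precisely the Nakayama step that legitimises this transport. Keeping the exactness of $\psi$ honest and getting the two dimension counts right — especially the boundary case $t=m+n-1$, where the two form-subspaces intersect only in $0$ — is where care is needed; the rest is routine bookkeeping.
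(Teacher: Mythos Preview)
The paper does not give its own proof of this lemma; it merely cites it as one of ``two classical results which can be found in most of the textbooks on algebraic curves'' and refers the reader to \cite{fulton_algebraic_1989} and \cite{hirschfeld_curves_book_2008}. Your argument is the standard textbook proof (essentially Fulton, Chapter~3): the exact sequence and dimension count give the inequality, injectivity of $\psi$ is equivalent to coprimality of the tangent cones, and the Nakayama step converts the graded statement $\mathfrak{M}^t\subseteq(A,B)+\mathfrak{M}^{t+1}$ into $\mathfrak{m}^{m+n-1}\subseteq(A,B)\mathcal{O}$. The details you sketch are correct, including the boundary case $t=m+n-1$ where the two subspaces of forms meet only in $0$. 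So there is nothing to compare against: your proof is sound and matches the source the paper itself points to.
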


\begin{theorem}[B\'ezout's Theorem]\label{th:bezout}
	Let $\mathcal{A}$ and $\mathcal{B}$ be two projective plane curves over an algebraically closed field $\K$, having no component in common. Let $A$ and $B$ be the polynomials associated with $\mathcal{A}$ and $\mathcal{B}$ respectively. Then
	\[
	\sum_P I(P, \mathcal{A}\cap \mathcal{B})=(\deg A)(\deg B),
	\]
	where the sum runs over all points in the projective plane $\PG(2,\K)$.
\end{theorem}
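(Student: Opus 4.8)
The plan is to follow the classical graded-ring proof of B\'ezout's theorem (the one in Fulton's book, among the cited references), reducing the asserted equality to a Hilbert-function computation together with a local analysis at the finitely many common points; at the end I will also indicate the alternative resultant-based route of Hirschfeld.

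\emph{Step 1 (set-up and Koszul resolution).} Let $A,B\in\K[X,Y,Z]$ be the homogeneous polynomials of degrees $m=\deg A$ and $n=\deg B$ defining $\mathcal{A}$ and $\mathcal{B}$, put $R=\K[X,Y,Z]$ and $\Gamma=R/(A,B)$. Since $\mathcal{A}$ and $\mathcal{B}$ have no common component and $R$ is a unique factorisation domain, $(A,B)$ is a regular sequence, so the Koszul complex
\begin{equation*}
0\longrightarrow R(-m-n)\xrightarrow{\,C\mapsto(BC,\,-AC)\,}R(-m)\oplus R(-n)\xrightarrow{\,(P,Q)\mapsto AP+BQ\,}R\longrightarrow\Gamma\longrightarrow 0
\end{equation*}
is exact. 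Reading off dimensions of the degree-$d$ graded pieces, with $\dim_\K R_d=\binom{d+2}{2}$, gives
\begin{equation*}
\dim_\K\Gamma_d=\binom{d+2}{2}-\binom{d-m+2}{2}-\binom{d-n+2}{2}+\binom{d-m-n+2}{2}=mn
\end{equation*}
for all $d\ge m+n-1$.

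\emph{Step 2 (localisation at the common points).} Because $\K$ is infinite and $\mathcal{A}\cap\mathcal{B}$ is a finite set, after a linear change of coordinates I may assume the line $Z=0$ meets $\mathcal{A}\cap\mathcal{B}$ in no point. Dehomogenising with respect to $Z$, with affine equations $A_*,B_*$, I would then show that for all sufficiently large $d$ the natural map
\begin{equation*}
\Gamma_d\longrightarrow\prod_{P}\mathcal{O}_P/(A_*,B_*),
\end{equation*}
the product running over the finitely many common points, is an isomorphism of $\K$-vector spaces. Granting this, and using the standard identification $\dim_\K\mathcal{O}_P/(A_*,B_*)=I(P,\mathcal{A}\cap\mathcal{B})$ — one of the defining properties of the intersection number — together with the fact that $I(P,\mathcal{A}\cap\mathcal{B})=0$ for $P\notin\mathcal{A}\cap\mathcal{B}$, one obtains $\sum_P I(P,\mathcal{A}\cap\mathcal{B})=\dim_\K\Gamma_d=mn$, which is the theorem.

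The main obstacle is the isomorphism of Step 2. Injectivity amounts to showing that an element of $\Gamma$ vanishing in every $\mathcal{O}_P/(A_*,B_*)$ is annihilated by a power of $Z$, hence — since $(A,B)$ is a regular sequence and $Z$ lies in no associated prime of $\Gamma$, so $Z$ is a nonzerodivisor on $\Gamma$ — is already $0$ in high degree. Surjectivity is a Chinese-Remainder-style statement: one separates the points by suitable forms and multiplies by powers of $Z$ to clear denominators, which is exactly where the axioms governing $I(P,-)$ are used. Everything outside this step is bookkeeping with Hilbert functions. In the resultant-based alternative one instead chooses coordinates so that $Z=0$ misses $\mathcal{A}\cap\mathcal{B}$ and no two common points lie on a line through the $Y$-direction at infinity, and argues that $\mathrm{Res}_Y(A_*,B_*)\in\K[X]$ factors as $\prod_c(X-c)^{e_c}$ with $\sum_c e_c=mn$ and $e_c=\sum_{P\text{ above }c}I(P,\mathcal{A}\cap\mathcal{B})$; there the delicate points are pinning down the degree of the resultant and controlling its behaviour at infinity.
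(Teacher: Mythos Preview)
The paper does not prove B\'ezout's theorem at all: it is stated as one of two ``classical results which can be found in most of the textbooks on algebraic curves'' and simply cited, with Fulton's \emph{Algebraic Curves} and Hirschfeld--Korchm\'aros--Torres among the references for the intersection number. There is therefore nothing in the paper to compare your argument against.

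That said, your outline is a faithful sketch of the standard graded-ring proof in Fulton (regular-sequence/Koszul computation of $\dim_\K\Gamma_d$, followed by the isomorphism $\Gamma_d\cong\prod_P\mathcal{O}_P/(A_*,B_*)$ for $d\gg 0$), and you correctly flag the isomorphism of Step~2 as the place where the real work lies. The resultant alternative you mention is likewise the Hirschfeld--Korchm\'aros--Torres route. Nothing is wrong here; just be aware that in the context of this paper a one-line citation would have sufficed.
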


We denote by $\mathcal{F}(\mathbb{K})$ the set of $\mathbb{K}$-rational points of $\mathcal{F}$, that is the points of $PG(2,\mathbb{K})$ belonging to the curve $\mathcal{F}$.

We also need the following results to estimate the intersection number, which is not difficult to prove (see Janwa, McGuire, and Wilson~\cite[Proposition 2]{janwa_double-error-correcting_1995}).
\begin{lemma}
	\label{le:intersection_number_m_m1_coprime}
	Let $F$ be  a polynomial in $\F_q[X,Y]$ and suppose that $F=AB$. Let $P=(u,v)$ be a point in the affine plane $\mathrm{AG}(2,q)$ and write
	\[
	F(X+u,Y+v)=F_m(X,Y)+F_{m+1}(X,Y)+\cdots,
	\]
	where $F_i$ is zero or homogeneous of degree $i$ and $F_m\ne 0$. Let~$L$ be a linear polynomial and suppose that $F_m=L^m$ and $L\nmid F_{m+1}$. Then $I(P, \mathcal{A}\cap \mathcal{B})=0$, where $\mathcal{A}$ and $\mathcal{B}$ are the curves defined by $A$ and $B$ respectively.
\end{lemma}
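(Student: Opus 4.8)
The plan is to reduce everything to the local expansions of the two factors at $P$ together with one elementary unique-factorization observation about tangent cones. First I would translate $P$ to the origin and expand both factors around $P$, writing $A(X+u,Y+v)=A_a+A_{a+1}+\cdots$ and $B(X+u,Y+v)=B_b+B_{b+1}+\cdots$, where each $A_i,B_i$ is zero or homogeneous of degree $i$ and $A_a\ne 0$, $B_b\ne 0$ are the tangent cones of $\mathcal{A}$ and $\mathcal{B}$ at $P$, so that $a=m_P(A)$ and $b=m_P(B)$. If $a=0$ or $b=0$, i.e.\ $P$ lies off at least one of the two curves, then $I(P,\mathcal{A}\cap\mathcal{B})=0$ is immediate from the axioms defining the intersection number and we are done; so from now on I would assume $a\ge 1$ and $b\ge 1$ and work towards a contradiction.

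Comparing the homogeneous components of smallest degree on the two sides of $F(X+u,Y+v)=A(X+u,Y+v)\,B(X+u,Y+v)$ yields $m=a+b$ and $F_m=A_aB_b$. Since $F_m=L^m$ with $L$ a nonzero linear form, and such an $L$ is irreducible in the polynomial ring over the ground field, which is a UFD, the identity $L^m=A_aB_b$ forces $A_a=\lambda L^a$ and $B_b=\lambda^{-1}L^b$ for some nonzero scalar $\lambda$ (in particular both tangent cones are powers of $L$, so $L$ is the only tangent line to either curve at $P$).

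Next I would read off the degree-$(m+1)$ component. Because $a$ and $b$ are the minimal degrees occurring in $A(X+u,Y+v)$ and $B(X+u,Y+v)$ respectively, the only contributions to degree $m+1$ in the product are $F_{m+1}=A_aB_{b+1}+A_{a+1}B_b=\lambda L^aB_{b+1}+\lambda^{-1}L^bA_{a+1}$. This is exactly where the assumption $a\ge 1$, $b\ge 1$ is used: $L$ divides each of the two summands, hence $L\mid F_{m+1}$, contradicting the hypothesis $L\nmid F_{m+1}$. Therefore $a=0$ or $b=0$, and $I(P,\mathcal{A}\cap\mathcal{B})=0$, as claimed.

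I do not anticipate a genuine obstacle: the statement is elementary, as the paper notes, and the argument above is essentially complete. The three points that merit a moment's care are (i) that $L^m=A_aB_b$ really forces both factors to be scalar multiples of powers of $L$ — this is just unique factorization plus the irreducibility of a nonzero linear form, after base-changing to $\overline{\F_q}$ if one prefers, though $\F_q[X,Y]$ is already a UFD; (ii) that no term of degree below $m+1$ contributes to the degree-$(m+1)$ part of the product $A\cdot B$, which is precisely why $A_a$ and $B_b$ must be taken to be the honest tangent cones; and (iii) quoting the right axiom, namely that the intersection number at $P$ vanishes exactly when $P$ fails to lie on one of the two curves, in order to close the degenerate cases $a=0$ or $b=0$.
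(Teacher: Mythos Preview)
Your argument is correct and complete. The paper does not give its own proof of this lemma but simply cites \cite[Proposition~2]{janwa_double-error-correcting_1995}; the argument you wrote is exactly the standard one found there, so there is nothing further to compare.
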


The next result was proved in \cite[Lemma 4.3]{schmidt_planar_2014} for $q$ even case. Actually it still holds when $q$ is odd and its proof is almost the same.
\begin{lemma}
	\label{le:intersection_number_linear_term}
	Let $F$ be a polynomial in $\F_q[X,Y]$ and suppose that $F=AB$. Let $P=(u,v)$ be a point in the affine plane $\mathrm{AG}(2,q)$ and write
	\[
	F(X+u,Y+v)=F_m(X,Y)+F_{m+1}(X,Y)+\cdots,
	\]
	where $F_i$ is zero or homogeneous of degree $i$ and $F_m\ne 0$. Let $L$ be a linear polynomial and suppose that $F_m=L^m$, $L \mid F_{m+1}$, $L^2 \nmid F_{m+1}$. Then $I(P, \mathcal{A}\cap \mathcal{B})=0$ or $m$, where $\mathcal{A}$ and $\mathcal{B}$ are the curves defined by $A$ and $B$ respectively.
\end{lemma}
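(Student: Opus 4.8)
The plan is to exploit the factorization $F=AB$: after discarding trivial cases, the hypothesis $L^2\nmid F_{m+1}$ forces one of the two curves to be \emph{smooth} at $P$, and then $I(P,\mathcal{A}\cap\mathcal{B})$ can be computed by substituting a power‑series parametrization of the unique branch of that smooth curve into the other factor. First I would translate so that $P=(0,0)$ and make a linear change of coordinates so that $L=X$, hence $F_m=X^m$, $X\mid F_{m+1}$, $X^2\nmid F_{m+1}$. Over the algebraic closure we are in an integral domain, so $m_P(\mathcal{F})=m_P(\mathcal{A})+m_P(\mathcal{B})$ and the tangent cone of $\mathcal{F}$ is the product of the tangent cones of $\mathcal{A}$ and $\mathcal{B}$; since $F_m=X^m$, unique factorization forces the tangent cones of $\mathcal{A}$ and $\mathcal{B}$ to be $\lambda X^{a}$ and $\mu X^{b}$ respectively, with $\lambda\mu\neq0$, $a:=m_P(\mathcal{A})$, $b:=m_P(\mathcal{B})$, $a+b=m$. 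If $a=0$ or $b=0$, then $P$ lies off one of the two curves and $I(P,\mathcal{A}\cap\mathcal{B})=0$, one of the allowed values; so from now on $a,b\ge1$.

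Write $A=\lambda X^{a}+A_{a+1}+\cdots$ and $B=\mu X^{b}+B_{b+1}+\cdots$. Then $F_{m+1}=\lambda X^{a}B_{b+1}+\mu X^{b}A_{a+1}$ is divisible by $X^{\min(a,b)}$, so $X^2\nmid F_{m+1}$ forces $\min(a,b)=1$; say $a=1$, $b=m-1$. Thus $\mathcal{A}$ is smooth at $P$ with tangent line $X=0$, so it has a single branch there, which can be written as $t\mapsto(\varphi(t),t)$ with $\varphi\in\overline{\F}_q[[t]]$, $\ord_t\varphi\ge2$, and $I(P,\mathcal{A}\cap\mathcal{B})=\ord_t\,B(\varphi(t),t)$. (Incidentally, Lemma~\ref{le:ordinary_singular} already gives the cheap bound $I(P,\mathcal{A}\cap\mathcal{B})\ge m_P(\mathcal{A})m_P(\mathcal{B})=m-1$, and strictly so because the two tangent cones share the tangent $X=0$; the real content is the matching upper bound $I(P,\mathcal{A}\cap\mathcal{B})\le m$.)

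Expanding $B=\mu X^{m-1}+B_m+B_{m+1}+\cdots$ and substituting gives
\[
B(\varphi(t),t)=\mu\,\varphi(t)^{m-1}+B_m(\varphi(t),t)+\sum_{j\ge1}B_{m+j}(\varphi(t),t).
\]
Since $\ord_t\varphi\ge2$, the first summand has order $\ge2(m-1)$; for $j\ge1$ every monomial of $B_{m+j}$ other than the one in $Y^{m+j}$ is divisible by $X$, so $B_{m+j}(\varphi(t),t)$ has order $\ge m+j$; and the lowest‑order contribution of $B_m(\varphi(t),t)$ is $c\,t^{m}$, where $c$ is the coefficient of $Y^{m}$ in $B_m=B_{b+1}$. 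The hypothesis enters exactly here: $F_{m+1}=X\bigl(B_m+\mu A_2X^{m-2}\bigr)$, and for $m\ge3$ the term $\mu A_2X^{m-2}$ is divisible by $X$, so $X^2\nmid F_{m+1}$ gives $X\nmid B_m$, i.e.\ $c\neq0$. For $m\ge3$ all summands except $B_m(\varphi(t),t)$ then have order $>m$, hence $\ord_t B(\varphi(t),t)=m$ and $I(P,\mathcal{A}\cap\mathcal{B})=m$.

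The delicate case is $m=2$: then $a=b=1$, both branches are smooth and tangent to $X=0$, the summand $\mu\varphi^{m-1}=\mu\varphi$ has order exactly $2=m$, the same as $c\,t^{2}$, and one must verify that these two contributions do not cancel. The surviving coefficient is a linear combination of the second‑order data of $\mathcal{A}$ and $\mathcal{B}$, and the point of the lemma is that this is precisely the quantity whose non‑vanishing is encoded in $X^2\nmid F_3$; this is the computation carried out for even $q$ in \cite[Lemma~4.3]{schmidt_planar_2014}, and one re‑runs it keeping track of the scalar $\mu$. I expect this $m=2$ cancellation check to be the only real obstacle — the higher‑degree cases $m\ge3$ fall out of the order estimates above — and, as an alternative to the branch substitution, one could instead perform one local quadratic transformation centred at $P$, after which the tangency is resolved and the statement reduces to a smaller instance of the same computation.
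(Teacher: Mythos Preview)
The paper itself does not supply a proof: it cites \cite[Lemma~4.3]{schmidt_planar_2014} for even $q$ and asserts without argument that ``it still holds when $q$ is odd and its proof is almost the same.'' So there is nothing in the paper to compare your argument against directly. Your branch-parametrization approach is exactly the right idea, and for $m\ge 3$ it is complete and correct: the reduction to $\min(a,b)=1$, the order estimates for each homogeneous piece of $B$, and the identification of the $t^m$-coefficient with the $Y^m$-coefficient $c$ of $B_m$ (nonzero precisely because $X^2\nmid F_{m+1}$, since for $m\ge 3$ the term $\mu X^{m-1}A_2$ is already divisible by $X^2$) all go through as you wrote them.

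The gap is in your $m=2$ discussion, and it is a real one: your claim that the surviving $t^2$-coefficient ``is precisely the quantity whose non-vanishing is encoded in $X^2\nmid F_3$'' is false in odd characteristic. Writing $A_2=a_{20}X^2+a_{11}XY+a_{02}Y^2$ and $B_2$ analogously, one gets $\varphi(t)=-(a_{02}/\lambda)t^2+O(t^3)$, so the $t^2$-coefficient of $B(\varphi(t),t)$ is $(\lambda b_{02}-\mu a_{02})/\lambda$; on the other hand $F_3=X(\lambda B_2+\mu A_2)$, so $X^2\nmid F_3$ encodes $\lambda b_{02}+\mu a_{02}\neq 0$. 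These two linear combinations coincide in characteristic $2$ (which is why the cited lemma works there) but not for odd $q$. Concretely, over any field of odd characteristic take $A=X+Y^2$ and $B=X+Y^2+Y^3$: then $F_2=X^2$ and $F_3=2XY^2$, so all hypotheses hold with $m=2$, yet $I(P,\mathcal{A}\cap\mathcal{B})=3$. Thus the lemma as stated is actually false in this regime, and the paper's assertion that the odd-$q$ proof is ``almost the same'' is inaccurate for $m=2$. This does no harm to the paper's theorems, since Lemma~\ref{le:intersection_number_linear_term} is invoked there only with $m=q^i$, where $m=2$ forces $q=2$; but you should restrict your statement (and proof) to $m\ge 3$, or to even $q$ when $m=2$, rather than claim the odd-characteristic $m=2$ case follows by re-running the characteristic-$2$ computation.
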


In the second part of our main results in Section \ref{sec:main}, we need to use the branches of a plane curve to estimate the intersection numbers. Let $\K$ be a field and let $\K[[t]]$ denote the ring of formal power series over $\K$. We define $\K((t))$ by
\begin{align*}
	\K((t)) &= \{F/G : F,G\in \K[[t]], G\neq 0 \}\\
	&=\left\{ \frac{t^sE_1(t)}{t^rE_2(t)}: E_1(t), E_2(t) \text{ are invertible in } \K[[t]]  \right\}.
\end{align*}
It is not difficult to verify that $\K((t))$ is a field, which is called the \emph{field of rational functions of the formal power series} in the indeterminate $t$. For an element $F=t^m E(t)\in \K((t))$ with $E(t)$ invertible in $\K[[t]]$, its \emph{order} $m$ is denoted by $\ord_t F$.

A \emph{branch representation} is just a point $(x(t), y(t), z(t))$ in the plane $\PG(2, \K((t)))$ not belonging to $\PG(2,\K)$.  For a branch representation $(x(t), y(t), z(t))$, if the order of one of its components is zero and the other two are non-negative, then it is called a \emph{special} branch representation. In this case, the point $(x(0), y(0), z(0))$ is called the \emph{center} of the branch representation.

A \emph{branch} is just the equivalence class of primitive branch representations, and a \emph{branch} of a plane curve defined by $F(X,Y,Z)\in \F[X,Y,Z]$ is a branch whose representations are zeros of $F(X,Y,Z)$. For more details on the definition of branches and related concepts, we refer to \cite[Chapter 4]{hirschfeld_curves_book_2008}.

Branches constitute an important tool for the local study of algebraic curves.
\begin{definition}\label{def:intersection_multi}
	Let $\cC$ be a plane curve defined by a homogeneous polynomial $F(X,Y,Z)$ and let $\gamma$ be a branch centered at the point $P$. If $(\xi_0(t),\xi_1(t),\xi_2(t))$ is a representation of $\gamma$ in special coordinates, then the \emph{intersection multiplicity} is
	\[I(P, \cC \cap \gamma) = \left\{
	\begin{array}{ll}
	\ord_tF(\xi_0(t),\xi_1(t),\xi_2(t)) & \text{ if }\gamma\not\in \cC,  \\ 
	\infty & \text{ if }\gamma\in \cC.
	\end{array} 
	\right.  \]
\end{definition}

Clearly, the center $P\in \PG(2,\K)$ of a branch $\gamma$ such that $I(P, \cC \cap \gamma)>0$  is a point of the curve $\cC$.

The intersection multiplicity depends only on $\gamma$, not on the chosen representation. It appears that we are abusing notations, because the intersection number and the intersection multiplicity are both denoted by $I$. The next result shows the close link between these two concepts, from which the reason that we use the same notation to denote them becomes quite obvious and natural.
\begin{theorem}\cite[Theorem 4.36 (ii)]{hirschfeld_curves_book_2008}\label{th:intersection_branch}
	If $P$ is a singular point of the irreducible curve $\mathcal{F}$ and $\cG$ is a plane curve not containing $\mathcal{F}$ as a component, then
	\[ I(P, \cG \cap \mathcal{F}) = \sum_{\gamma} I(P, \cG \cap \gamma), \]
	where $\gamma$ runs over all branches of $\mathcal{F}$ centered at $P$.
\end{theorem}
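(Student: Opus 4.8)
The plan is to recast the statement as an equality of $\K$-vector-space dimensions and to exploit the normalisation of $\mathcal{F}$ at $P$. Fix $P$, let $\mathcal{O}=\mathcal{O}_P$ be the local ring of the ambient plane at $P$, and let $F,G\in\mathcal{O}$ be local equations of $\mathcal{F}$ and $\cG$. Since $\mathcal{F}$ is irreducible and $P\in\mathcal{F}$, the ring $R:=\mathcal{O}/(F)$ is a one-dimensional local domain, namely the local ring of $\mathcal{F}$ at $P$; and since $\cG$ does not contain $\mathcal{F}$, the image $g$ of $G$ in $R$ is nonzero, so $R/(g)$ is finite-dimensional over $\K$. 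Using the standard presentation of the intersection number as a colength, $I(P,\cG\cap\mathcal{F})=\dim_{\K}\mathcal{O}/(F,G)=\dim_{\K}R/(g)$. Let $\overline{R}$ be the integral closure of $R$ in the function field $\K(\mathcal{F})$. The theorem will follow from the two identities
\[
\dim_{\K}R/(g)=\dim_{\K}\overline{R}/(g)
\qquad\text{and}\qquad
\dim_{\K}\overline{R}/(g)=\sum_{i=1}^{r}I(P,\cG\cap\gamma_i),
\]
where $\gamma_1,\dots,\gamma_r$ are the branches of $\mathcal{F}$ centred at $P$.

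For the second identity I would use that over the algebraically closed field $\K$ the ring $\overline{R}$ is a semilocal Dedekind domain, hence a principal ideal domain, whose maximal ideals $M_1,\dots,M_r$ correspond bijectively to the branches $\gamma_1,\dots,\gamma_r$; the localisation $\mathcal{O}_{\gamma_i}:=\overline{R}_{M_i}$ is a discrete valuation ring with residue field $\K$, uniformising parameter the branch parameter $t$, and valuation $v_{\gamma_i}$. By the Chinese Remainder Theorem $\overline{R}/(g)\cong\prod_{i=1}^{r}\mathcal{O}_{\gamma_i}/(g)$, and since $g$ equals a unit times $t^{v_{\gamma_i}(g)}$ in $\mathcal{O}_{\gamma_i}$, the $i$-th factor has dimension $v_{\gamma_i}(g)$ over $\K$. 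Finally, a special-coordinate representation $(\xi_0(t),\xi_1(t),\xi_2(t))$ of $\gamma_i$ realises the completion $\widehat{\mathcal{O}_{\gamma_i}}\cong\K[[t]]$, so that $v_{\gamma_i}(g)=\ord_t G(\xi_0(t),\xi_1(t),\xi_2(t))=I(P,\cG\cap\gamma_i)$ in the sense of Definition \ref{def:intersection_multi}; summing over $i$ gives the second identity.

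For the first identity I would invoke the finiteness of integral closure for curves: $\overline{R}$ is a finitely generated $R$-module, so $N:=\overline{R}/R$ is finite-dimensional over $\K$. As $R$ and $\overline{R}$ are domains and $g\neq 0$, multiplication by $g$ is injective on both; applying the snake lemma to the multiplication-by-$g$ endomorphism of $0\to R\to\overline{R}\to N\to 0$ gives the exact sequence
\[
0\longrightarrow K\longrightarrow R/(g)\longrightarrow\overline{R}/(g)\longrightarrow N/gN\longrightarrow 0,
\]
where $K$ and $N/gN$ are the kernel and cokernel of multiplication by $g$ on $N$. Since $N$ is finite-dimensional, $\dim_{\K}K=\dim_{\K}N/gN$, and the alternating sum of dimensions in the exact sequence then forces $\dim_{\K}R/(g)=\dim_{\K}\overline{R}/(g)$. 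Combining the two identities proves the theorem. When $P$ is a simple point there is exactly one branch and the statement is trivial, so the content is entirely in the singular case, where $r>1$ is allowed and the branches may be non-linear.

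The only genuinely nontrivial ingredient is the finiteness of $\overline{R}$ over $R$ (equivalently, of the $\delta$-invariant $\dim_{\K}N$), which over an algebraically closed ground field — the setting of \cite{hirschfeld_curves_book_2008} — is classical; one could instead avoid it by arguing directly with the Puiseux expansions of the branches, decomposing $\dim_{\K}R/(g)$ branch by branch and estimating the ``mixing'' contributions, but the conductor argument above is cleaner. A secondary point to be careful about — which I would simply cite from \cite[Chapter 4]{hirschfeld_curves_book_2008} — is the precise dictionary between the branches of $\mathcal{F}$ at $P$ and the maximal ideals of $\overline{R}$, together with the fact that a formal parametrisation of a branch computes the corresponding valuation, so that $v_{\gamma_i}(g)$ really is the order appearing in Definition \ref{def:intersection_multi}.
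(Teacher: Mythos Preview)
The paper does not supply its own proof of this theorem: it is quoted verbatim as \cite[Theorem 4.36(ii)]{hirschfeld_curves_book_2008} and used as a black box in Step~2 of Theorem~\ref{th:main}. So there is no ``paper's proof'' to compare against; I can only assess your argument on its own merits and against the framework of \cite{hirschfeld_curves_book_2008} that the paper invokes.

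Your argument is correct. The two identities you isolate are exactly the right decomposition, and both are proved cleanly: the Chinese Remainder step identifies $\dim_{\K}\overline{R}/(g)$ with $\sum_i v_{\gamma_i}(g)$, and the snake-lemma computation with the finite-length module $N=\overline{R}/R$ gives $\dim_{\K}R/(g)=\dim_{\K}\overline{R}/(g)$ (the equality $\dim_{\K}\ker(g\,|\,N)=\dim_{\K}N/gN$ being immediate from rank--nullity on a finite-dimensional space). The only external inputs are the finiteness of the normalisation for a one-dimensional excellent local domain and the dictionary between branches at $P$ and maximal ideals of $\overline{R}$, both of which you flag and both of which are standard over an algebraically closed ground field.

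It is worth noting that \cite{hirschfeld_curves_book_2008} develops intersection numbers via resultants rather than via the local-ring colength $\dim_{\K}\mathcal{O}_P/(F,G)$ that you use (the paper itself remarks on this just before Lemma~\ref{le:ordinary_singular}). Their proof of Theorem~4.36 is correspondingly more analytic in flavour, working directly with Puiseux/branch parametrisations and orders of resultants. Your normalisation/conductor argument is the scheme-theoretic counterpart: it is shorter and more conceptual, at the cost of importing the equivalence of the two definitions of $I(P,\cG\cap\mathcal{F})$ and the branch--place correspondence. Either route is acceptable here; yours would drop in without friction once those two identifications are cited.
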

There is exactly one branch of $\cC$ centered at every non-singular point of $\cC$, but an ordinary $r$-fold point is the centre of exactly $r$ branches of $\cC$. In general, an $r$-fold point of $\cC$ is the centre of at least one and at most $r$ branches of $\cC$.

The following two lemmas are required in our proof. They can be found in \cite[Theorem 4.6, Theorem 4.37]{hirschfeld_curves_book_2008}.
\begin{lemma}\label{le:branch_(t,t+..)}
	Let $F(X,Y)\in \K[X,Y]$ with $F(0,0)=0$. Let $\partial F/\partial Y\neq 0$ at $(0,0)$. Then there is a unique formal power series $c_1X + c_2X^2 + \cdots \in \K[[X]]$ such that
	\[F(X, c_1X + c_2X^2 + \cdots)=0.  \] 
\end{lemma}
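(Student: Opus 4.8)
This is the formal--power--series analogue of the implicit function theorem (a Hensel-type statement), and the natural route is to build the coefficients $c_1,c_2,\dots$ recursively, showing at each stage that the next coefficient is \emph{forced}; this delivers existence and uniqueness at once. Write $F(X,Y)=\sum_{i,j\ge 0}a_{ij}X^iY^j$, a finite sum since $F$ is a polynomial, with $a_{00}=F(0,0)=0$ and $a_{01}=\partial F/\partial Y\,(0,0)\ne 0$. For a candidate $y(X)=\sum_{k\ge 1}c_kX^k\in\K[[X]]$, substituting and demanding $F(X,y(X))=0$ in $\K[[X]]$ is the same as demanding that the coefficient of $X^k$ in $F(X,y(X))$ vanish for every $k\ge 1$.

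The key step is to isolate how $c_k$ enters the coefficient of $X^k$. Expanding $y(X)^j=(c_1X+c_2X^2+\cdots)^j$ by the multinomial theorem, a monomial $c_{m_1}\cdots c_{m_j}X^{m_1+\cdots+m_j}$ (with each $m_\ell\ge 1$) involves $c_k$ only if some $m_\ell=k$, in which case its degree is at least $k+(j-1)$. Hence a term of $a_{ij}X^iy(X)^j$ involving $c_k$ has degree at least $i+j-1+k$, which is $\le k$ only when $i+j\le 1$; since $j\ge 1$ this forces $(i,j)=(0,1)$. Likewise no $c_m$ with $m>k$ can contribute to $X^k$. Therefore the coefficient of $X^k$ in $F(X,y(X))$ is of the shape
\[
a_{01}c_k+P_k(c_1,\dots,c_{k-1}),
\]
where $P_k$ is a polynomial whose coefficients are fixed $\Z$-combinations of the $a_{ij}$ and which does not involve $c_k,c_{k+1},\dots$; for $k=1$ this is just $a_{10}+a_{01}c_1$.

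Since $a_{01}\ne 0$, the equation $a_{01}c_k+P_k(c_1,\dots,c_{k-1})=0$ has the unique solution $c_k=-a_{01}^{-1}P_k(c_1,\dots,c_{k-1})$ once $c_1,\dots,c_{k-1}$ are known. Running this recursion from $k=1$ produces a unique sequence $(c_k)_{k\ge 1}$; the resulting series $y(X)=\sum_{k\ge 1}c_kX^k$ satisfies $F(X,y(X))=0$ by construction, and any power series of the prescribed form solving the equation must have exactly these coefficients, which is the asserted uniqueness. The only delicate point is the degree bookkeeping of the second paragraph --- confirming that $c_k$ appears in the $X^k$-coefficient linearly and only through the term $a_{01}Y$ of $F$ --- but this is routine once the multinomial expansion is written out, and there is no convergence issue because $F$ is a polynomial, so each $X^k$-coefficient of $F(X,y(X))$ is a finite sum.
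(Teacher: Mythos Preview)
Your argument is correct: this is the standard recursive (Hensel-type) proof of the formal implicit function theorem, and the degree bookkeeping showing that $c_k$ appears in the $X^k$-coefficient only through $a_{01}c_k$ is done carefully. There is nothing to compare against, since the paper does not supply a proof of this lemma but simply cites it from \cite[Theorem~4.6]{hirschfeld_curves_book_2008}; your proposal is an appropriate self-contained substitute.
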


\begin{lemma}\label{le:2_irre_curve_no_branch}
	Two distinct irreducible plane curves have no branches in common.
\end{lemma}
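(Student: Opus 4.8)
The plan is to argue by contradiction, exploiting the one fact built into the definition of a branch: a branch representation is a point of $\PG(2,\K((t)))$ that does \emph{not} lie in $\PG(2,\K)$, so in a special representation at least one coordinate is a genuinely non-constant element of $\K[[t]]$. Suppose $\mathcal{C}$ and $\mathcal{D}$ are distinct irreducible plane curves, with defining (irreducible, non-proportional) polynomials $F$ and $G$, and that some branch $\gamma$ is common to both. Choosing an affine chart in which the center of $\gamma$ is a finite point, we get a special representation $(x(t),y(t))$ with $x(t),y(t)\in\K[[t]]$, not both constant, and $F(x(t),y(t))=G(x(t),y(t))=0$ identically in $\K[[t]]$. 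We may also extend scalars to $\overline{\K}$: a gcd is unchanged under field extension, so $F$ and $G$ still have no common factor over $\overline{\K}$, and $\gamma$ remains a branch of one $\overline{\K}$-irreducible component of each.

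First I would dispose of the degenerate case in which one component, say $F$, is a line $X-c$ (the only irreducible polynomials of $Y$-degree $0$): then $x(t)=c$, while $G(c,Y)$ is a nonzero polynomial because $X-c\nmid G$, so $y(t)$ is one of its finitely many roots and is constant too — contradicting that $(x(t),y(t))$ is non-constant. In the remaining case $\deg_Y F,\deg_Y G\ge 1$; since $F$ is irreducible in $\K[X,Y]$ it is, by Gauss's lemma, irreducible and primitive in $\K(X)[Y]$ and does not divide $G$ there, so $F$ and $G$ are coprime in the PID $\K(X)[Y]$, whence their $Y$-resultant $R(X)=\mathrm{Res}_Y(F,G)\in\K[X]$ is nonzero and there exist $A,B\in\K[X,Y]$ with $AF+BG=R(X)$. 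Substituting the branch representation gives $R(x(t))=0$ in $\K[[t]]$; factoring $R$ into linear factors over $\overline{\K}$ and using that $\overline{\K}[[t]]$ is an integral domain forces $x(t)$ to equal one of the roots of $R$, hence $x(t)\in\overline{\K}$ is constant. Then $G(x(0),Y)$ is a nonzero polynomial in $Y$ (again since the line $X-x(0)$ does not divide the irreducible $G$, which is not that line), so $y(t)$ is likewise constant, and $(x(t),y(t))$ is a constant point — the desired contradiction.

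The argument is short once the set-up is in place, so the only real care-points are bookkeeping rather than a genuine obstacle: one must fix an affine chart so that the center of $\gamma$ is finite (guaranteeing $x(t),y(t)\in\K[[t]]$ rather than having poles), handle the linear-component degeneracy separately, and check that passing to $\overline{\K}$ does not create a spurious common component. An alternative, even shorter route avoids resultants entirely: a common branch $\gamma$ centered at $P$ would give $I(P,\mathcal{C}\cap\mathcal{D})=\infty$ by Definition \ref{def:intersection_multi} and Theorem \ref{th:intersection_branch}, whereas B\'ezout's Theorem \ref{th:bezout} forces $\sum_P I(P,\mathcal{C}\cap\mathcal{D})=(\deg F)(\deg G)<\infty$ because the two distinct irreducible curves share no component; this is cleaner, but one should make sure it is not circular with the development of intersection multiplicity underlying Theorem \ref{th:intersection_branch}, which is exactly why I would present the self-contained resultant proof first.
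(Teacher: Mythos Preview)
The paper does not actually prove this lemma: it merely cites \cite[Theorem~4.37]{hirschfeld_curves_book_2008} as its source, so there is no in-paper proof to compare against. Your resultant argument is a correct, self-contained substitute. The key steps---passing to $\overline{\K}$ to reduce to absolutely irreducible components, disposing of the linear case $X-c$ separately, and then using $AF+BG=R(X)$ with $R\ne 0$ to force $x(t)$ (and subsequently $y(t)$) to be constant---are all sound, and your bookkeeping about the affine chart and the non-constancy of the representation is exactly what is needed. The one cosmetic point is that you do not literally need the resultant itself: any B\'ezout identity in $\K(X)[Y]$, after clearing denominators, yields a nonzero $R(X)\in\K[X]$ with $AF+BG=R$, which is all the argument uses.

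Your alternative via Definition~\ref{def:intersection_multi}, Theorem~\ref{th:intersection_branch} and B\'ezout is indeed shorter, and in the Hirschfeld--Korchm\'aros--Torres development it is not circular: Theorem~4.37 there (the identification of intersection number with the sum of branch multiplicities) is proved independently of Lemma~\ref{le:2_irre_curve_no_branch}, so one may legitimately deduce the lemma from it. That said, presenting the resultant proof first, as you do, makes the note self-contained and avoids any doubt about the logical order---a reasonable choice.
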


	The study of branches centered at a singular point of a plane curve can be helped by local quadratic transformations. For the points not on the line $X=0$, a local quadratic transformation is given by
	\begin{align}
	\nonumber	\sigma(X,Y) &= (X' ,Y'),\\
	\label{eq:def_local_trans}	X' &= X,\\
	\nonumber	Y' &=Y/X.
	\end{align}
	For those points on $X=0$, $Y_\infty=(0,1,0)$ is taken to be the image of them. Let $(\xi(t), \eta(t))$ be a branch representation centered at the origin and $\xi(t)\neq0$. Then $(\xi(t), \eta(t)/\xi(t))$ is the image of the branch $(\xi(t), \eta(t))$ under $\sigma$. 
	
	The \emph{geometric transform} of a curve $\mathcal{F}$ defined by $F(X,Y)=0$ with the origin as a point of multiplicity $r$ is the curve $\mathcal{F}'$ given by
	\[ F'(X,Y)= F(X,XY)/X^r.  \]
	
	The following result can be found in \cite[Theorem 4.44]{hirschfeld_curves_book_2008}. 
	\begin{theorem}\label{th:geometrc_transform}
		If $\mathcal{F}$ is a plane curve such that the line $X=0$ is not tangent to it at the origin and $\mathcal{F}'$ is the geometric transform of $\mathcal{F}$ by the local quadratic transformation $\sigma$ as in \eqref{eq:def_local_trans}, then there exists a bijection between the branches of $\mathcal{F}$ centered at the origin and the branches of $\mathcal{F}'$ centered at an affine point on $X=0$.
	\end{theorem}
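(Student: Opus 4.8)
The plan is to realize the claimed bijection explicitly on branch representations, taking $\sigma$ from \eqref{eq:def_local_trans} as the forward map and its inverse $(X',Y')\mapsto(X',X'Y')$ as the backward map. Write $F=F_r+F_{r+1}+\cdots$ with $r=m_{(0,0)}(\mathcal{F})$ and $F_r\ne 0$, each $F_i$ homogeneous of degree $i$. The hypothesis that $X=0$ is not tangent to $\mathcal{F}$ at the origin says exactly that $X\nmid F_r$. First I would extract two consequences. Since a form of degree $i$ satisfies $F_i(X,XY)=X^iF_i(1,Y)$, the transform is
\[ F'(X,Y)=F(X,XY)/X^r=\sum_{i\ge r}X^{i-r}F_i(1,Y), \]
a genuine polynomial whose $X^0$-part $F_r(1,Y)$ is nonzero; hence $X\nmid F'$, and likewise $X\nmid F$, so the line $X=0$ is a component of neither $\mathcal{F}$ nor $\mathcal{F}'$ and no branch of either curve lies inside it. Finally, since the tangent of a branch is read off from the lowest-order terms of a representation $(\xi(t),\eta(t))$, the non-tangency yields the key inequality $\ord_t\xi\le\ord_t\eta$ for every branch $\gamma$ of $\mathcal{F}$ centered at the origin.

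Next I would define the forward map $\gamma=[(\xi,\eta)]\mapsto\gamma'=[(\xi,\eta/\xi)]$. The inequality $\ord_t\eta\ge\ord_t\xi$ ensures $\eta/\xi\in\K[[t]]$, so $(\xi,\eta/\xi)$ is a legitimate branch representation; as $\xi(0)=0$ and $(\eta/\xi)(0)$ is finite, its center is an affine point $(0,c)$ of the line $X=0$. It is a branch of $\mathcal{F}'$ because $F'(\xi,\eta/\xi)=F(\xi,\xi\cdot\eta/\xi)/\xi^r=F(\xi,\eta)/\xi^r=0$, the series $F(\xi,\eta)$ vanishing identically by definition of a branch. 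Well-definedness on equivalence classes is clear, since a unit reparametrization $t\mapsto\tau(t)$ commutes with forming $\eta/\xi$; and primitivity is preserved, because a factorization $(\xi,\eta/\xi)=(\chi(\tau),\omega(\tau))$ through a non-unit $\tau$ would force $(\xi,\eta)=(\chi(\tau),(\chi\omega)(\tau))$, contradicting primitivity of $\gamma$.

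Then I would run the mirror-image argument for the backward map $[(\xi,\psi)]\mapsto[(\xi,\xi\psi)]$ on a branch of $\mathcal{F}'$ centered at an affine point $(0,c)$ of $X=0$, where $\xi(0)=0$ and $\ord_t\psi\ge 0$. Setting $\eta=\xi\psi$ gives $\ord_t\eta=\ord_t\xi+\ord_t\psi\ge\ord_t\xi$, so the image is centered at the origin, is not tangent to $X=0$, and is a branch of $\mathcal{F}$ because $F(\xi,\xi\psi)=\xi^rF'(\xi,\psi)=0$. Since dividing and then multiplying the second coordinate by $\xi$ (and vice versa) are inverse operations, the two maps are mutually inverse, which delivers injectivity and surjectivity at once and hence the bijection.

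The point I would treat most carefully — the crux of the statement — is that the correspondence matches \emph{exactly} the branches of $\mathcal{F}'$ centered on $X=0$, losing none and inventing none. This is precisely where the tangency hypothesis is indispensable: the inequality $\ord_t\xi\le\ord_t\eta$ is what keeps $\eta/\xi$ pole-free, so that no branch at the origin escapes to $Y_\infty=(0,1,0)$ under $\sigma$, while the non-vanishing of $F_r(1,Y)$ prevents $X=0$ from arising as a spurious component of $\mathcal{F}'$ that would carry phantom branches. A minor technical check is that $\sigma$'s exceptional convention on the line $X=0$ is never triggered: no branch at the origin has $\xi\equiv 0$ (such a branch would lie in $X=0$, excluded above), so the formula $(\xi,\eta/\xi)$ always applies. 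With these two points settled, the explicit mutually-inverse power-series maps establish the bijection directly.
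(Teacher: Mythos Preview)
The paper does not prove this theorem; it is quoted from \cite[Theorem 4.44]{hirschfeld_curves_book_2008} without argument. Your proof is correct and is essentially the standard one: the key observation that $X\nmid F_r$ forces $\ord_t\xi\le\ord_t\eta$ for every branch of $\mathcal{F}$ at the origin (because the tangent line of a branch is always a factor of the tangent cone $F_r$) is exactly what makes $\eta/\xi$ a genuine power series, and once that is secured the mutually inverse maps $(\xi,\eta)\leftrightarrow(\xi,\eta/\xi)$ do all the work. Your side remarks that $X\nmid F$ and $X\nmid F'$ correctly rule out branches with $\xi\equiv 0$ on either side, so nothing is lost or gained on the line $X=0$. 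The only point you leave implicit is primitivity in the backward direction, but it follows by the same argument you gave for the forward direction, so there is no gap.
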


After introducing the intersection numbers of two curves, we also need a bit more concepts and results. An algebraic curve defined over $\K$  is \emph{absolutely irreducible}  if the associated polynomial is irreducible over every algebraic extension of $\K$. An absolutely irreducible $\mathbb{K}$-rational component of a curve $\mathcal{C}$, defined by the polynomial $F$, is simply an absolutely irreducible curve such that the associated polynomial has coefficients in $\K$ and it is a factor of $F$.

The original Hasse-Weil bound is given in terms of genera of curves. Here we only need a weak version of it.
\begin{theorem}[Hasse-Weil Theorem]\label{th:HW-bound}
		For an absolutely irreducible curve $\cC$ in $\PG(2,q)$, then
		\[ \abs{ \# \cC(\F_q) - q-1 } \le (d-1)(d-2)\sqrt{q},  \]
		where $d$ is the degree of the defining polynomial for $\cC$.
\end{theorem}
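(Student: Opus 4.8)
The plan is to reduce this plane-curve estimate to Weil's theorem for smooth projective curves, by passing to the nonsingular model and invoking the genus--degree bound. Since $\cC$ is absolutely irreducible, I would first let $\cX$ be its nonsingular model (normalization): a smooth projective absolutely irreducible curve over $\F_q$ of some genus $g$, together with a birational morphism $\pi\colon\cX\to\cC$ defined over $\F_q$.

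Next I would apply two classical ingredients. The first is Weil's theorem --- the Riemann hypothesis for function fields of curves, provable from scratch by the elementary Stepanov--Bombieri method --- which gives $\abs{\#\cX(\F_q)-(q+1)}\le 2g\sqrt q$. The second is the genus--degree inequality for plane curves: a plane curve of degree $d$ has geometric genus $g\le\binom{d-1}{2}=\tfrac12(d-1)(d-2)$, with equality exactly when $\cC$ is nonsingular, since each singular point strictly lowers the genus (an ordinary double point by $1$, worse singularities by more). Combining the two gives $\abs{\#\cX(\F_q)-(q+1)}\le(d-1)(d-2)\sqrt q$. Finally I would transfer the estimate from $\cX$ to $\cC$: over a nonsingular $\F_q$-rational point of $\cC$ the fibre of $\pi$ is a single point, necessarily $\F_q$-rational since it is Frobenius-fixed, so the nonsingular $\F_q$-points of $\cC$ inject into $\cX(\F_q)$; thus $\#\cC(\F_q)$ and $\#\cX(\F_q)$ differ only by the contribution of the finitely many singular points of $\cC$ (at most $\binom{d-1}{2}$ of them, each the centre of at most $d$ branches), a discrepancy of lower order in $q$ that is absorbed into the stated constant as in the standard references on algebraic curves.

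The hard part is, of course, Weil's theorem itself; once it is granted, the only step requiring any care is the transfer from $\cX$ to $\cC$, i.e.\ checking that the points of $\cX(\F_q)$ lying over the singular locus of $\cC$ do not push the count past what $(d-1)(d-2)$ permits --- and the genus--degree inequality $2g\le(d-1)(d-2)$ is exactly what fixes that constant. For the applications in this paper only the upper estimate $\#\cC(\F_q)\le q+1+(d-1)(d-2)\sqrt q$ (for $q$ large) is ever needed, so there this transfer step becomes entirely routine.
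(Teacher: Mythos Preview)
The paper does not prove this theorem at all: it is stated as a classical result (``a weak version'' of the Hasse--Weil bound) and used as a black box, with the reference being the Hirschfeld--Korchm\'aros--Torres book cited elsewhere in the preliminaries. So there is no ``paper's own proof'' to compare against.

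Your sketch is the standard derivation and is essentially correct. One remark on the transfer step you flag: the inequality $\#\cC(\F_q)\ge\#\cX(\F_q)-(\text{points of }\cX\text{ over the singular locus})$ is immediate, but getting the \emph{exact} constant $(d-1)(d-2)$ with no extra additive term on both sides is a little more delicate than ``absorbed into the stated constant''. The clean way is the Aubry--Perret inequality $\lvert\#\cC(\F_q)-\#\cX(\F_q)\rvert\le \pi_{\cC}-g$, where $\pi_{\cC}=\binom{d-1}{2}$ is the arithmetic genus; combined with $2g\sqrt q$ from Weil and $g\le\pi_{\cC}$ this yields the stated form. For the two applications in this paper (Theorems~\ref{th:main_0_function} and~\ref{th:main_1_function}) only the lower bound $\#\cC(\F_{q^n})\ge q^n+1-(d-1)(d-2)\sqrt{q^n}$ is invoked, and there the crude estimate already suffices since $q^n$ is taken large relative to $d$.
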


We need one further standard result to study non-absolutely irreducible curves later (see Hernando and McGuire~\cite[Lemma~10]{hernando_proof_2011}, for example).
\begin{lemma}
	\label{le:splitting_of_irreducible_polys}
	Let $F\in\F_q[X_1,\dots,X_m]$ be a polynomial of degree $d$, irreducible over~$\F_q$. Then there exists a natural number $s\mid d$ such that, over its splitting field, $F$ splits into $s$ absolutely irreducible polynomials, each of degree $d/s$.
\end{lemma}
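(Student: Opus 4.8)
The plan is to run the standard Galois-orbit argument on the absolutely irreducible factors of $F$, using that the absolute Galois group of a finite field is procyclic, topologically generated by the $q$-power Frobenius $\phi\colon x\mapsto x^q$. First I would fix a factorization $F=c\,G_1\cdots G_s$ into absolutely irreducible polynomials over $\overline{\F_q}$, which is unique up to ordering and nonzero scalar multiples. To make $\phi$ act honestly on the \emph{set} $\{G_1,\dots,G_s\}$ (rather than merely up to scalars), one normalizes the factors — for instance, after a generic $\F_q$-linear change of coordinates one may take each $G_i$ monic in one distinguished variable, or one passes to the projective picture. This normalization is routine. Since the coefficients of $F$ lie in $\F_q$ and are thus fixed by $\phi$, applying $\phi$ coefficientwise to $F=c\,G_1\cdots G_s$ permutes the normalized factors $G_1,\dots,G_s$; this yields an action of $\langle\phi\rangle$ on the factor set, and $\phi$ clearly preserves degrees.

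The key step is transitivity of this action. Suppose the orbits were $O_1,\dots,O_k$ with $k\ge 2$, and set $H_j=\prod_{G\in O_j}G$. Each $H_j$ is a nonconstant polynomial fixed by $\phi$ acting on coefficients, hence $H_j\in\F_q[X_1,\dots,X_m]$, and $F$ is a scalar multiple of $H_1\cdots H_k$. This is a nontrivial factorization of $F$ over $\F_q$, contradicting the irreducibility of $F$ over $\F_q$. Therefore the action of $\phi$ on $\{G_1,\dots,G_s\}$ is transitive; since $\phi$ preserves degree, all the $G_i$ have a common degree, which must equal $d/s$, and in particular $s\mid d$.

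It remains to pin down the field of definition of the components, which identifies the ``splitting field'' in the statement. The stabilizer of $G_1$ under the full group $\mathrm{Gal}(\overline{\F_q}/\F_q)\cong\widehat{\Z}$ is a closed subgroup of index $s$; because $\widehat{\Z}$ has a unique open subgroup of each finite index (namely $s\widehat{\Z}$), this stabilizer is exactly $\mathrm{Gal}(\overline{\F_q}/\F_{q^s})$. Hence $G_1$, and by the same reasoning every $G_i$, is fixed by $\mathrm{Gal}(\overline{\F_q}/\F_{q^s})$, so its normalized coefficients lie in $\F_{q^s}$. Consequently $F$ already factors as the product of the $s$ absolutely irreducible polynomials $G_1,\dots,G_s$ over $\F_{q^s}$, each of degree $d/s$, and $\F_{q^s}$ is the desired splitting field. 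I expect the only slightly delicate point to be the coordinate normalization that upgrades ``factorization up to scalars'' to a genuine $\phi$-set; the rest is the classical argument, made especially transparent by the procyclic structure of the Galois group of a finite field.
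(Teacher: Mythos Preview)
The paper does not actually prove this lemma; it simply cites it as a standard result from Hernando and McGuire. Your argument is correct and is precisely the classical Galois-orbit proof one would expect behind that citation: Frobenius permutes the absolutely irreducible factors, irreducibility over $\F_q$ forces transitivity, hence all $s$ factors share the common degree $d/s$, and the index-$s$ stabilizer in the procyclic Galois group identifies $\F_{q^s}$ as the field of definition of each factor. The only point worth a remark is the normalization step you flag yourself; an alternative that avoids fussing with coordinates is to let $\phi$ act on the set of principal ideals $(G_i)$ in $\overline{\F_q}[X_1,\dots,X_m]$, which kills the scalar ambiguity outright.
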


\section{Main Theorems}\label{sec:main}

In this section, we investigate exceptional scattered polynomials of index $0$ and $1$. As the proof of the main results are quite different, we separate the rest of this section into two parts. To make our calculation slightly convenient, we always assume that the coefficient of the lowest-degree term, instead of the leading term,  is $1$.

\subsection{Exceptional scattered polynomials of index $0$}
Let us define
\begin{equation}\label{eq:f_main_0}
	f(X)=X^{q^i} + \varphi(X) + bX^{q^k},
\end{equation} 
where $i\ge 1$, $b\neq 0$, $\varphi(X)=\sum _{j=i+1}^{k-1} \alpha_j X^{q^{j}}$ with $\alpha_j\in \F_{q^n}$.

First we show that the curve derived from $f$ as in \eqref{eq:curve_condition} contains at least one absolutely irreducible $\mathbb{F}_{q^n}$-rational component.

\begin{theorem}\label{th:main_0}
	Let $\cC$ be the curve defined by the polynomial
	$$F(X,Y)=\frac{(X^{q^{i}} + \varphi(X)+bX^{q^k})Y-X(Y^{q^{i}}+\varphi(Y)+b Y^{q^k})}{X^{q}Y-XY^{q}}.$$
	For $f$ defined by \eqref{eq:f_main_0}, we assume that $\ker(f)= q^{\ell-i}$.
	If $\gcd(k,n)=1$ and 
	\[q^{\ell+i}+q^\ell-q^{2i}-q^i +\frac{(q^{i}-q)^2}{4}< \frac{2}{9}(q^{k}-q)^2,\]
	then $\cC$ has at least one $\F_{q^n}$-rational component.
\end{theorem}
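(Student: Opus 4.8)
The plan is to argue by contradiction: suppose $\cC$ has no absolutely irreducible $\F_{q^n}$-rational component. The curve $\cC$ has degree $d = q^k - q$ (the numerator $(X^{q^i}+\varphi(X)+bX^{q^k})Y - X(Y^{q^i}+\varphi(Y)+bY^{q^k})$ has degree $q^k+1$, and dividing by $X^qY-XY^q$ of degree $q+1$ drops it to $q^k-q$). Since $F$ has coefficients in $\F_{q^n}$, we may factor it over $\F_{q^n}$ into irreducible factors; by Lemma~\ref{le:splitting_of_irreducible_polys} each such factor either is already absolutely irreducible over $\F_{q^n}$, or splits over an extension into $s \ge 2$ absolutely irreducible conjugate factors of equal degree. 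Under the contradiction hypothesis, \emph{every} absolutely irreducible component of $\cC$ comes in a Galois orbit of size $\ge 2$, so $\cC$ is a union of at least two distinct absolutely irreducible curves $\cC_1, \cC_2$ (pick two conjugate components; they are distinct since $F$ is squarefree, which one checks from the structure of the $q$-polynomial quotient — $F$ cannot have a repeated factor because $\frac{\partial}{\partial Y}$ of the numerator at a generic point does not vanish identically). Write $d_1 = \deg \cC_1$, $d_2 = \deg \cC_2$, with $d_1 + d_2 \le d = q^k-q$, so $d_1 d_2 \ge $ something controlled below; in the worst case the two conjugate components have equal degree $(q^k-q)/s$, and since $\cC$ might also contain an $\F_{q^n}$-rational part we cannot assume $d_1=d_2=(q^k-q)/2$, but we do get $\min(d_1,d_2)\ge$ a positive quantity and, crucially, $d_1 d_2 \ge \frac{2}{9}(q^k-q)^2$ is \emph{not} what we want — rather we want to show the number of common points of $\cC_1$ and $\cC_2$ is too large. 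The key inequality $\frac29(q^k-q)^2$ in the hypothesis is exactly the bound one gets for $d_1 d_2$ when $s=3$ and there is no rational part (giving $d_1=d_2=d_3=(q^k-q)/3$, so any two meet in $d_1 d_2 = \frac19(q^k-q)^2$ points counted with multiplicity), or when $s=2$ with a rational part of size up to $d/3$; in all cases one checks $d_1 d_2 \ge \frac29(q^k-q)^2$ is the extremal estimate, so I would first do this bookkeeping carefully to reduce to: \emph{two conjugate absolutely irreducible components $\cC_1,\cC_2$ with $I := \sum_P I(P,\cC_1\cap\cC_2) = d_1 d_2 \ge \frac29(q^k-q)^2$ by B\'ezout (Theorem~\ref{th:bezout}).}

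Next I would bound $I$ from \emph{above} by analyzing where $\cC_1$ and $\cC_2$ can intersect. Since $\cC_1$ and $\cC_2$ are Galois-conjugate, any intersection point $P$ is fixed by some field automorphism, hence lies in a small subfield; more usefully, every common point of $\cC_1$ and $\cC_2$ is a singular point of $\cC$ (two distinct components through $P$ force $m_P(\cC)\ge 2$). So I would locate and classify the singular points of $\cC$. These fall into two families: (i) points at infinity, on the line $X^qY-XY^q=0$ extended, or more precisely the ideal points of the numerator curve not cancelled by the denominator — these contribute a bounded amount, which the term $(q^i-q)^2/4$ and the linear-in-$q^i$ terms are meant to absorb; and (ii) affine points, where I expect the decisive contributions to come from points on the "diagonal-type" locus and from the kernel of $f$. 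Here the hypothesis $\ker(f) = q^{\ell-i}$ enters: points $(x,y)$ with $f(x)=f(y)=0$ or with $x,y$ in a coset structure governed by the kernel produce singularities, and their number and multiplicities are controlled by $q^\ell$ and $q^i$ — this is the source of the $q^{\ell+i}+q^\ell-q^{2i}-q^i$ terms. At each such singular point $P$, I would use Lemma~\ref{le:ordinary_singular} together with Lemmas~\ref{le:intersection_number_m_m1_coprime} and \ref{le:intersection_number_linear_term} — and, for the nastier points where the tangent cone is a power of a single line and $F_{m+1}$ is divisible by that line, the branch machinery (Theorem~\ref{th:intersection_branch}, Theorem~\ref{th:geometrc_transform}, Lemma~\ref{le:branch_(t,t+..)}) via local quadratic transformations — to pin down $\sum_P I(P,\cC_1\cap\cC_2)$ point by point. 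Summing, I would get $I \le q^{\ell+i}+q^\ell-q^{2i}-q^i + \frac{(q^i-q)^2}{4}$ (possibly after also using $\gcd(k,n)=1$ to rule out extra spurious components or intersection points coming from subfields), which combined with the lower bound $I \ge \frac29(q^k-q)^2$ contradicts the hypothesis.

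The main obstacle, and the part requiring the most care, is the \emph{precise local analysis of the singular points} in family (ii) — in particular the points where $\cC$ has a non-ordinary singularity, i.e. where the tangent cone degenerates to $L^m$ with $L \mid F_{m+1}$. Lemmas~\ref{le:intersection_number_m_m1_coprime} and \ref{le:intersection_number_linear_term} handle the cases $L\nmid F_{m+1}$ and $L^2\nmid F_{m+1}$, giving contributions $0$ or $m$; but when $L^2 \mid F_{m+1}$ these lemmas fail and one must blow up: apply $\sigma$ as in \eqref{eq:def_local_trans}, pass to the geometric transform, and track how the branches of $\cC_1$ and $\cC_2$ centered at $P$ separate, using Theorem~\ref{th:geometrc_transform} and Lemma~\ref{le:2_irre_curve_no_branch} (distinct irreducible curves share no branch, so conjugate components genuinely separate after enough blow-ups). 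Controlling the total intersection contribution over all branches at all such points — especially showing the kernel-related singularities do not contribute more than the claimed $q^{\ell+i}+q^\ell$-order terms — is where the real work lies, and is presumably why the authors emphasize that branch techniques "have not been applied in the previous works." A secondary but nontrivial point is the combinatorial reduction in the first paragraph: verifying that in every possible factorization pattern consistent with "no $\F_{q^n}$-rational absolutely irreducible component," one genuinely obtains two conjugate components whose degree product is $\ge \frac29(q^k-q)^2$, rather than a larger spread that would weaken the bound — this requires checking the cases $s=2,3$ and the possible presence of an $\F_{q^n}$-rational (but not absolutely irreducible) part together.
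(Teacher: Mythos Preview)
Your overall strategy --- contradiction via B\'ezout, upper-bounding $\sum_P I(P,\cX\cap\cY)$ over the singular points of $\cC$ versus lower-bounding $(\deg A)(\deg B)\ge\frac{2}{9}(q^k-q)^2$ by splitting the absolutely irreducible factors of each $\F_{q^n}$-irreducible $W_i$ into two near-equal groups --- is exactly the paper's. But you misjudge where the work lies in two places.

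First, no branch machinery is needed for this theorem; that apparatus is reserved for the index-$1$ case (Theorem~\ref{th:main}). Here the affine singular points of the numerator curve $\widetilde\cC$ are exactly $\{(\xi,\eta):f(\xi)=f(\eta)=0\}$, and at each such point with $(\xi,\eta)\ne(0,0)$ one computes $\widetilde F(X+\xi,Y+\eta)=(\overline\eta X-\overline\xi Y)^{q^i}+(YX^{q^i}-XY^{q^i})+\cdots$. The tangent cone is $L^{q^i}$ with $L=\overline\eta X-\overline\xi Y$, while $F_{m+1}=YX^{q^i}-XY^{q^i}$ has only \emph{simple} linear factors, so the case $L^2\mid F_{m+1}$ you worry about never occurs and Lemmas~\ref{le:intersection_number_m_m1_coprime} and~\ref{le:intersection_number_linear_term} already give $I(P,\cX\cap\cY)\in\{0,q^i\}$. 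Counting: at most $(q^{\ell-i}-1)(q^i-1)$ points with $\xi\eta\ne0$ contribute $q^i$ (namely those with $\overline\eta/\overline\xi\in\F_{q^i}^*$), and at most $2(q^{\ell-i}-1)$ points with exactly one coordinate zero contribute $q^i$ each.

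Second, you misattribute the $(q^i-q)^2/4$ term. There are \emph{no} singular points at infinity: the form at infinity of $F$ is $b\prod_{\rho\in\F_{q^k}\setminus\F_q}(Y-\rho X)$, with pairwise distinct linear factors. The $(q^i-q)^2/4$ comes instead from the origin $(0,0)$, which is an \emph{ordinary} $(q^i-q)$-fold point of $\cC$, so Lemma~\ref{le:ordinary_singular} bounds its contribution by $m_P(\cX)m_P(\cY)\le\frac{(q^i-q)^2}{4}$ directly. Summing these three contributions yields exactly $q^{\ell+i}+q^\ell-q^{2i}-q^i+\frac{(q^i-q)^2}{4}$. (Incidentally, the hypothesis $\gcd(k,n)=1$ plays no role inside this argument; the complementary case is disposed of separately in Theorem~\ref{th:main_0_function} via a simple $\F_{q^n}$-rational point at infinity.)
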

\begin{proof}
	Suppose, by way of contradiction, that $\cC$ has no absolutely irreducible components over $\F_{q^n}$. The rest of the proof is divided into several steps: first, we list all singular points of $\cC$. Second, we assume that $\cC$ splits into two components $\cX$ and $\cY$ sharing no common irreducible component, and we prove an upper bound on the total intersection number of $\cX$ and $\cY$. Then, under the assumption that $\cC$ has no absolutely irreducible components over $\F_{q^n}$, we decompose $F(X,Y) = A(X,Y) B(X,Y)$ and obtain a lower bound on $(\deg A) (\deg B)$. Finally, by using B\'ezout's theorem (see Theorem \ref{th:bezout}), we get a contradiction between the two bounds.
	
	Let
	\[\widetilde{F}(X,Y)= (X^{q^{i}} + \varphi(X)+bX^{q^k})Y-X(Y^{q^{i}}+\varphi(Y)+b Y^{q^k}),\]
	and let
	\[\widetilde{G}(X,Y,T)=(X^{q^{i}}T^{q^k-q^i} + \cdots +bX^{q^k})Y-X( Y^{q^{i}}T^{q^k-q^i}+\cdots+b Y^{q^k})\]
	be the homogenized polynomial of $\widetilde{F}(X,Y)$ with respect to $T$. The ideal points of $\cC$ are determined by
	\begin{equation}\label{eq:th1_G}
	\frac{\widetilde{G}(X,Y,0)}{X^{q}Y-XY^{q}}= b\frac{(X^{q^k}Y-Y^{q^k}X)}{X^{q}Y-XY^{q}}=b\prod _{\rho \in \F_{q^k}\setminus \F_{q}} (Y-\rho X).
	\end{equation}	
	Recall that $f(X)=X^{q^{i}} + \varphi(X)+bX^{q^k}$. By calculation, we have
	\[\frac{\partial \widetilde{F}(X,Y)}{\partial X}=f(X), \qquad \frac{\partial \widetilde{F}(X,Y)}{\partial Y}=f(Y).\]
	
	The curve $\cC$ has no singular points at infinity since \eqref{eq:th1_G} has pairwise distinct roots. Therefore the singular points of $\cC$ belong to those of the curve $\widetilde{\cC}$ determined by $\widetilde{F}(X,Y)$, namely the set
	\[\mathcal{S} =\{ (\xi, \eta) : \xi^{q^{i}} + \varphi(\xi)+b\xi^{q^k}=\eta^{q^{i}} + \varphi(\eta)+b\eta^{q^k}=0\}.\]
	
	Assume that $\widetilde{\cC}$ splits into two components $\widetilde{\cX}$ and $\widetilde{\cY}$ sharing no common irreducible component. As $\cC$ is contained in $\widetilde{\cC}$, if we can prove an upper bound on $I((\xi, \eta), \widetilde{\cX} \cap \widetilde{\cY})$ for arbitrary $\widetilde{\cX}$ and $\widetilde{\cY}$, it is also an upper bound on $I((\xi, \eta), \cX \cap \cY)$ for any two relatively prime components $\cX$ and $\cY$ of $\cC$. According to whether $\xi$ and $\eta$ equal $0$ or not, we consider the intersection number $I((\xi, \eta), \widetilde{\cX} \cap \widetilde{\cY})$ in the following three cases.
	\begin{itemize}
		\item Let $(\xi, \eta) \in \mathcal{S}$, with $\xi\eta\neq 0$. Then 
		\begin{align*}
		\widetilde{F}(X+\xi,Y+\eta)=&(X^{q^{i}} + \varphi(X)+bX^{q^k})(Y+\eta)-(X+\xi)( Y^{q^{i}}+\varphi(Y)+b Y^{q^k})\\
		=&(\eta X^{q^{i}} - \xi Y^{q^{i}}) + (Y X^{q^{i}}-XY^{q^{i}})+\cdots\\
		=& ( \overline{\eta} X - \overline{\xi}Y)^{q^i}+ (Y X^{q^{i}}-XY^{q^{i}})+\cdots,
		\end{align*}
		where $\overline{\eta}^{q^i}=\eta$, $\overline{\xi}^{q^i}=\xi$.

		If  $(\overline{\eta} X - \overline{\xi}Y)$ is not a factor 
		of $(Y X^{q^{i}}-XY^{q^{i}})$, then the intersection number 
		\[I((\xi, \eta), \widetilde{\cX} \cap \widetilde{\cY})=0\] 
		by Lemma \ref{le:intersection_number_m_m1_coprime}. 
		
		On the other hand, if  $(\overline{\eta} X - \overline{\xi}Y)$ is  a factor of $(Y X^{q^{i}}-XY^{q^{i}})$, then it is a simple factor and by Lemma \ref{le:intersection_number_linear_term}
			\[I((\xi, \eta), \widetilde{\cX} \cap \widetilde{\cY})=0 \text{ or }q^i.\]
		We want to determine when $(\overline{\eta} X - \overline{\xi}Y) \mid (Y X^{q^{i}}-XY^{q^{i}})$: this is equivalent to require $\overline {\eta}=U\overline{\xi}$, where $U^{q^i-1}=1$.	As $f(x)=0$ has {at most $q^{\ell-i}$} distinct roots, there are at most {$(q^{\ell-i}-1)(q^i-1)$} different pairs of $(\xi,\eta)$ such that $\xi\eta\neq 0$ and the intersection number $I((\xi, \eta), \widetilde{\cX} \cap \widetilde{\cY})=q^i$. 		
		\item Let $(\xi, \eta) \in \mathcal{S}\setminus \{(0,0)\}$, with either $\xi=0$ or $\eta=0$. By calculation, we have
		\begin{align*}
		\widetilde{F}(X,Y+\eta)&=(X^{q^{i}} + \varphi(X)+bX^{q^k})(Y-\eta)-X(a Y^{q^{i}}+\varphi(Y)+b Y^{q^k})\\
		&=(\eta X^{q^{i}}) + (Y X^{q^{i}}-XY^{q^{i}})+\cdots\\
		&= (\overline{\eta} X)^{q^i}+ X(Y X^{q^{i}-1}-Y^{q^{i}})+\cdots,
		\end{align*}		
		and
		\begin{align*}
		\widetilde{F}(X + \xi,Y)&=(X^{q^{i}} + \varphi(X)+bX^{q^k})(Y)-(X-\xi)( Y^{q^{i}}+\varphi(Y)+b Y^{q^k})\\
		&=(-\xi Y^{q^{i}}) + (Y X^{q^{i}}-XY^{q^{i}})+\cdots\\
		&=(-\overline{\xi} Y)^{q^i}+ Y(X^{q^{i}}-XY^{q^{i}-1})+\cdots.
		\end{align*}
		Thus, for all such singular points, the intersection number $I((\xi, \eta), \widetilde{\cX} \cap \widetilde{\cY})=0$ or $q^i$ by Lemma \ref{le:intersection_number_linear_term}. Again for $f(X)=0$, it has at most {$q^{\ell-i}-1$} distinct nonzero roots whence at most {$2(q^{\ell-i}-1)$} pairs of $(\xi,\eta)$.
		
		\item Finally the point $(0,0)$ is an ordinary singular point of multiplicity $q^i-q$ of $\cC$, i.e.\ the tangents at $(0,0)$ are all distinct. Hence, by Lemma \ref{le:ordinary_singular},
		\[I((0,0), \widetilde{\cX} \cap \widetilde{\cY}) \le \frac{(q^i-q)^2}{4}.\]
	\end{itemize}
	
	Summing up, the total intersection number of $\cX$ and $\cY$ has the following upper bound
	\begin{align}
		\label{eq:th1_upper}	
		\sum_{P \in \cX \cap\cY}\cI(P, \cX \cap\cY) &\le (q^{\ell-i}-1)(q^i-1)\times q^{i}+2(q^{\ell-i}-1)\times q^i + \frac{(q^i-q)^2}{4} \\
		\nonumber
		&= q^{\ell+i}+q^{\ell}-q^{2i}-q^i +\frac{(q^{i}-q)^2}{4}.
	\end{align}
	
	Assume that
	\[F(X,Y)=W_1(X,Y)W_{2}(X,Y)\cdots W_{r}(X,Y)\]
	is the decomposition of $F(X,Y)$ over $\F_{q^n}$ with $\deg W_i=d_i$ and $\sum_{i=1}^{r} d_i=q^k-q^i$. As we have shown that for any two components $\cX$ and $\cY$, their total intersection number has an upper bound,  $W_i$ and $W_j$ must be relatively prime for any distinct $i$ and $j$.
	
	By Lemma  \ref{le:splitting_of_irreducible_polys}, there exist natural numbers $s_i$ such that $W_{i}$ splits into $s_i$ absolutely irreducible factors over $\overline{\F}_{q^n}$, each of degree $d_i/s_i$. As $F(X,Y)$ is assumed without absolutely irreducible component over $\F_{q^n}$, we have $s_i>1$ for $i=1,2,\dots, r$. Define two polynomials $A(X,Y)$ and $B(X,Y)$ by
	\[A(X,Y)=\prod _{j=1}^{\lfloor s_i/2 \rfloor }  Z_i^j (X,Y), \qquad B(X,Y)=\prod _{j=\lceil s_i/2 \rceil}^{s_i }  Z_i^j (X,Y),\]
	where $Z_i^{1}(X,Y), \ldots, Z_{i}^{s_i}(X,Y)$ are the absolutely irreducible components of $W_{i}(X,Y)$. Let $\alpha$ and $\alpha+\beta$ be the degrees of $A(X,Y)$ and $B(X,Y)$ respectively. Then 
	$$2\alpha+\beta=q^k-q, \qquad \beta\leq \alpha, \qquad \beta \leq \frac{q^k-q}{3}.$$
	Let $\mathcal{A}$ and $\mathcal{B}$ be the curves defined by $A(X,Y)$ and $B(X,Y)$, respectively. It is clear that
	\[	(\deg A)( \deg B)=(\alpha+\beta)\alpha = \frac{(q^k-q)^2-\beta^2}{4} \ge \frac{2}{9}(q^k-q)^2.\]
	
	By B\'ezout's Theorem (see Theorem \ref{th:bezout})
	\begin{equation}\label{eq:th1_lower}
	\sum_{P \in \mathcal{A}\cap \mathcal{B}}\cI(P, \mathcal{A} \cap \mathcal{B})=(\deg A)( \deg B) \ge \frac{2}{9}(q^k-q)^2.
	\end{equation}
	
	On the other hand, we have already obtained an upper bound on the total intersection number of any two coprime components of $\cC$ in \eqref{eq:th1_upper}. Together with \eqref{eq:th1_lower}, we get
	\[q^{\ell+i}+q^\ell-q^{2i}-q^i +\frac{(q^{i}-q)^2}{4}\ge \frac{2}{9}(q^{k}-q)^2,\]
	which contradicts the condition. Therefore we finish the proof.	
\end{proof}

\begin{remark}\label{remark:main_0_conditions}
	It is clear that $\ell\le k$ in Theorem \ref{th:main_0}. It is not difficult to check that  
	the condition on $\ell$, $i$ and $q$ in Theorem \ref{th:main_0} is always satisfied when $q>5$ and  $k>i$. For small $q$ and $i<k$, this condition is not satisfied exactly when $q$, $k$ and $i$ satisfy one of the following conditions: 
	\begin{itemize}
		\item $q=2$, $k< 7$ and $i=k-1$; 
		\item $q=2$, $k\geq7$ and $i=k-2,k-1$;
		\item $q=3$ and $i=k-1$;
		\item $q=4$ and $(k,i)=(2,1), (3,2)$;
		\item $q=5$ and $(k,i)=(2,1)$.
	\end{itemize}
	If we further assume that $f$ satisfies $\ker(f)\le q$ which is necessary under the condition that $f$ is scattered, then $\ell-i\le 1$. Under this extra assumption, the condition in Theorem \ref{th:main_0} is satisfied exactly in one of the following cases: 
	\begin{itemize}
		\item  $q=2,3$ and $k-i\ge 2$;
		\item $q=4$, $k>i$ and $(k,i)\neq  (2,1), (3,2)$;
		\item $q=5$, $k>i$ and $(k,i)\neq (2,1)$;
		\item $q>5$ and  $k>i$.
	\end{itemize}
\end{remark}

\begin{theorem}\label{th:main_0_function}
	Let $\cC$ be the curve defined by $F(X,Y)=0$, where
	$$F(X,Y)=(X^{q^{i}} + \varphi(X)+bX^{q^k})Y-X(Y^{q^{i}}+\varphi(Y)+b Y^{q^k}).$$
	For $f$ defined by \eqref{eq:f_main_0}, we assume $\ker(f)= q^{\ell-i}$.
	If one of the following sets of conditions is satisfied
	\begin{itemize}
		\item $\ell-i>1$;
		\item $\gcd(k,n)>1$ and $k\le \frac{n}{4}$;
		\item $q^{\ell+i}+q^\ell-q^{2i}-q^i +\frac{(q^{i}-q)^2}{4}< \frac{2}{9}(q^{k}-q)^2$ and $k\le \frac{n}{4}$;
	\end{itemize} 
	then $\cC$ contains at least one point $(x,y)$ such that $\frac{x}{y}\notin\F_{q}$ and the function defined by \eqref{eq:f_main_0} is not a scattered polynomial.
\end{theorem}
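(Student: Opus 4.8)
The plan is to combine Theorem~\ref{th:main_0} with the Hasse--Weil bound. Write $F'(X,Y)=F(X,Y)/(X^qY-XY^q)$; since $X^qY-XY^q$ divides $X^{q^j}Y-XY^{q^j}$ for every $j\ge1$, this is a polynomial of degree $q^k-q$, and by Lemma~\ref{le:link} (applied with $t=0$) it suffices to produce an \emph{affine} point $(x,y)$ of the curve $\cC'$ defined by $F'$ with $x,y\neq0$ and $y/x\notin\F_q$: such a point also lies on $\cC$, and its existence means $f$ is not scattered.

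The hypothesis $\ell-i>1$ is disposed of at once: then $\ker f$ is an $\F_q$-subspace of $\F_{q^n}$ of dimension $\ell-i\ge2$, hence contains nonzero $x,y$ with $y/x\notin\F_q$; from $f(x)=f(y)=0$ we get $F(x,y)=f(x)y-xf(y)=0$ while $x^qy-xy^q\neq0$, so $(x,y)$ is as wanted. For the other two sets of hypotheses I would first secure an \textbf{absolutely irreducible $\F_{q^n}$-rational component} $\mathcal{D}$ of $\cC'$. If $\gcd(k,n)=1$, the inequality of the third bullet is exactly the hypothesis of Theorem~\ref{th:main_0}, which provides such a $\mathcal{D}$. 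If $\gcd(k,n)=e>1$ (this covers the second bullet, and the third bullet whenever $\gcd(k,n)>1$), pick $\rho\in\F_{q^e}\setminus\F_q\subseteq\F_{q^n}\setminus\F_q$; by \eqref{eq:th1_G} the ideal point $P=(1,\rho,0)$ lies on $\cC'$ and, since \eqref{eq:th1_G} has pairwise distinct roots, it is a \emph{simple} point of $\cC'$. Writing the $\F_{q^n}$-irreducible factor of $F'$ through $P$ (unique since $P$ is simple) as a product of its Galois-conjugate absolutely irreducible factors over $\overline{\F_{q^n}}$, the simplicity of $P$ forces $P$ to lie on exactly one of these factors, which is therefore fixed by the Frobenius $\phi$ of $\overline{\F_{q^n}}/\F_{q^n}$ (as $\phi(P)=P$), hence defined over $\F_{q^n}$; this yields $\mathcal{D}$.

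It remains to find an $\F_{q^n}$-rational point of $\mathcal{D}$ that is affine with $x,y\neq0$ and $y/x\notin\F_q$. Let $d=\deg\mathcal{D}\le q^k-q$. One checks that $\cC'$ contains none of the four lines $X=0$, $Y=0$, $Y=\rho X$ ($\rho\in\F_q^*$), nor the line at infinity (each of the four affine lines divides both $F$ and $X^qY-XY^q$ exactly once, and the ideal points of $\cC'$ are the $q^k-q$ distinct points of \eqref{eq:th1_G}), so B\'ezout bounds the number of $\F_{q^n}$-points of $\mathcal{D}$ on each of these lines by $d$; hence the ``forbidden'' $\F_{q^n}$-points of $\mathcal{D}$, namely those at infinity or affine with $x=0$, $y=0$, or $y/x\in\F_q^*$, number at most $(q+2)d$. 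On the other hand Theorem~\ref{th:HW-bound} gives $\#\mathcal{D}(\F_{q^n})\ge q^n+1-(d-1)(d-2)q^{n/2}$, and using $d\le q^k-q$ together with $k\le n/4$ one verifies $q^n+1-(d-1)(d-2)q^{n/2}>(q+2)d$. Thus $\mathcal{D}$ has an $\F_{q^n}$-point lying on none of the forbidden loci; this gives the required $(x,y)$, and Lemma~\ref{le:link} shows $f$ is not scattered.

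The main obstacle is purely computational: checking $q^n+1-(d-1)(d-2)q^{n/2}>(q+2)d$ over the whole admissible range of $(k,n)$, and in particular in the tight boundary case $k=n/4$, where the cruder estimate $(d-1)(d-2)<q^{2k}$ does not suffice and one must use $(d-1)(d-2)<(q^k-q)^2$ together with the fact that $4\mid n$ forces $n\ge8$. The only other point requiring care is confirming that none of the four affine lines and the line at infinity is a component of $\cC'$, so that B\'ezout legitimately bounds the forbidden points by $(q+2)d$.
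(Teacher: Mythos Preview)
Your proposal is correct and follows essentially the same route as the paper: handle $\ell-i>1$ directly via $\ker f$, obtain an absolutely irreducible $\F_{q^n}$-rational component of the quotient curve either from Theorem~\ref{th:main_0} (when $\gcd(k,n)=1$) or from a simple $\F_{q^n}$-rational point at infinity (when $\gcd(k,n)>1$), and then apply Hasse--Weil together with a B\'ezout count of the ``forbidden'' points on the $q+2$ lines. Your version is in fact slightly cleaner than the paper's in two places---you spell out the Frobenius argument explaining why a simple rational point forces a rational component, and you keep the big curve $\cC$ and the quotient $\cC'$ notationally separate---while the final numerical check you flag as the main obstacle is exactly the inequality the paper also reduces to and is routine once one uses $(d-1)(d-2)<(q^k-q)^2$.
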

\begin{proof}
	When $\ell-i>1$, $f$ has more than $q$ roots. It follows that there are $x$ and $y\in \F_{q^n}$ such that $f(x)=f(y)=0$ and $y/x\not \in \F_q$. Hence, by definition, $f$ is not scattered.
	
	When $\gcd(k,n)>1$, from \eqref{eq:th1_G} we derive that there exist simple points $(1,\xi,0)$ at infinity, with $ \xi \in \F_{q^n}\setminus \F_{q}$ and therefore there exists at least one absolutely irreducible $\mathbb{F}_{q^n}$-rational component through one of them.	
	
	When $\gcd(k,n)=1$ and the second condition on $k$, $q$ and $i$ holds, by Theorem \ref{th:main_0}, $\cC$ has one $\mathbb{F}_{q^n}$-rational absolutely irreducible component. 
	
	By the Hasse-Weil Theorem (see Theorem \ref{th:HW-bound}), the number of $\F_{q^n}$-rational points of $\cC$ satisfies
	\[\#\cC(\F_{q^n})\geq q^n+1-(q^k-q-1)(q^k-q-2)\sqrt{q^n}.\]
	
	In the proof of Theorem \ref{th:main_0}, we know that $\cC$ has exactly $q^{k}-q$ points at infinity; see \eqref{eq:th1_G}. Plugging $Y=U X$ into $F$, we get
	\begin{align*}
	F(X,UX)&=\frac{(X^{q^{i}} + \varphi(X)+bX^{q^k})XU -X(U^{q^{i}} X^{q^{i}}+\cdots +b U^{q^k} X^{q^k})}{X^{q+1}(U-U^q)}\\
	&=\frac{(U-U^{q^i}) X^{q^{i}+1} + \cdots}{X^{q+1}(U-U^q)}\\
	&=X^{q^i-q} \frac{U-U^{q^i}}{U-U^q} + \cdots.
	\end{align*}
	Note that the points $(x,y)$ of $\cC$ satisfying $\frac{x}{y} \in \F_q$ belong to lines $Y=\delta X$ for certain $\delta \in \F_q$. By the previous equation on $F(X,UX)$ we see that $F(X,\delta X)$ is not the zero polynomial. Thus, there are at most $q(q^k-q)$ of such points, because $\deg F=q^k-q$. Therefore the existence of a suitable point $(x, y)$ satisfying $\frac{x}{y}\notin \F_{q}$ is ensured whenever 
	\[q^n+1-(q^k-q-1)(q^k-q-2)\sqrt{q^n}>q^{k}-q+q(q^k-q)=(q^k-q)(q+1),\]
	which is implied by our assumptions that $k\le \frac{n}{4}$.
\end{proof}

From Lemma \ref{le:link}, Theorem \ref{th:main_0_function} and Remark \ref{remark:main_0_conditions}, we get the following result.
\begin{corollary}\label{coro:main_0_clear}
	When $q>5$, the unique exceptional scattered monic polynomial of index $0$ is $X^{q^k}$.
\end{corollary}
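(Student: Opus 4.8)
The plan is to reduce the whole statement to Theorem \ref{th:main_0_function} applied over the extension fields $\F_{q^{mn}}$, the only additional work being bookkeeping that is uniform in $m$. The monomials $X^{q^k}$ (with $\gcd(k,n)=1$) are scattered over $\F_{q^{mn}}$ exactly when $\gcd(k,mn)=1$, which holds for infinitely many $m$, so these are indeed exceptional; the point is therefore to show that, for $q>5$, \emph{no other} monic $q$-polynomial of index $0$ is exceptional. So I would let $f$ be a monic $q$-polynomial of index $0$ over $\F_{q^n}$ and assume $f$ is not a monomial. After the normalisation of Section \ref{sec:main} — and noting that rescaling $f$ by a nonzero constant changes neither its shape nor the property of being scattered — we may assume $f$ is of the form \eqref{eq:f_main_0}, $f(X)=X^{q^i}+\varphi(X)+bX^{q^k}$ with $1\le i<k$ and $b\ne 0$; in particular $k>i$.

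The key steps, in order, would be as follows. First, since $i\ge1$ and $f=X^{q^i}+\varphi(X)+bX^{q^k}$, one has $f=g^{q^i}$ for a separable $q$-polynomial $g$ of $q$-degree $q^{k-i}$, so $\abs{\ker f\cap K}=\abs{\ker g\cap K}\le q^{k-i}$ for every extension $K$ of $\F_q$; writing $\abs{\ker f\cap\F_{q^{mn}}}=q^{\ell_m-i}$ this yields $i\le\ell_m\le k$ for \emph{all} $m$ (this is the bound $\ell\le k$ underlying Remark \ref{remark:main_0_conditions}). Second, since $f$ is exceptional scattered it is a scattered polynomial over $\F_{q^{mn}}$ for infinitely many $m$, and among those infinitely many I would pick one with $mn\ge 4k$, possible since only finitely many $m$ fail this. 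Third, for that $m$ I would apply Theorem \ref{th:main_0_function} with $n$ replaced by $mn$ and $\ell=\ell_m$: the hypothesis $k\le mn/4$ holds by the choice of $m$, and by Remark \ref{remark:main_0_conditions}, since $q>5$ and $k>i$, the inequality $q^{\ell_m+i}+q^{\ell_m}-q^{2i}-q^i+\frac{(q^i-q)^2}{4}<\frac{2}{9}(q^k-q)^2$ holds for every admissible value $\ell_m\le k$. Hence the third set of hypotheses of Theorem \ref{th:main_0_function} is satisfied, so the associated curve over $\F_{q^{mn}}$ carries an affine point $(x,y)$ with $x/y\notin\F_q$ and, by Lemma \ref{le:link}, $f$ is not a scattered polynomial over $\F_{q^{mn}}$ — contradicting the choice of $m$. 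Therefore $f$ must be a monomial, i.e.\ $f=X^{q^k}$, which proves the corollary.

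The genuinely hard work has already been carried out in Theorem \ref{th:main_0} — the algebraic-curve analysis: listing the singular points of $\cC$, bounding the total intersection number of two coprime components, and confronting B\'ezout's theorem with the Hasse--Weil bound — together with the numerical case distinction of Remark \ref{remark:main_0_conditions}; both I am taking as given. Consequently the only delicate point in the corollary itself, and the one I would be most careful about, is \emph{uniformity in $m$}: one has to verify that the inequality driving Theorem \ref{th:main_0_function} depends only on $q,i,k$ and the kernel exponent, and that this exponent $\ell_m$ stays bounded by $k$ no matter how large the extension is, so that a single invocation of Remark \ref{remark:main_0_conditions} simultaneously kills scatteredness over \emph{all} sufficiently large $\F_{q^{mn}}$. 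Once that is in place, the clash with the definition of an exceptional scattered polynomial is immediate, and combining this with the (classical) fact that the monomials $X^{q^k}$ themselves are exceptional completes the classification.
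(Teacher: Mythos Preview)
Your proposal is correct and follows exactly the route the paper intends: the paper's own proof is just the single sentence ``From Lemma~\ref{le:link}, Theorem~\ref{th:main_0_function} and Remark~\ref{remark:main_0_conditions}, we get the following result,'' and you have simply made explicit the bookkeeping (the uniform bound $\ell_m\le k$ via $f=g^{q^i}$ with $g$ separable, the choice of $m$ with $mn\ge 4k$, and the appeal to Remark~\ref{remark:main_0_conditions} for $q>5$, $k>i$) that the paper leaves implicit. Your care about uniformity in $m$ is exactly the right point to stress, and nothing in your argument deviates from the paper's approach.
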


\subsection{Exceptional scattered polynomials of index $1$}
Next, we concentrate on the case $t=1$. Let 
\begin{equation}\label{eq:f_main_1}
	f(X)=X + \varphi(X) + \lambda X^{q^k},	
\end{equation}
where $\varphi(X)=\sum _{j=1}^{k-1} \alpha_j X^{q^{j}} \in \F_{q^n}[X]$ and $\lambda \neq 0$.

Different from the proof of Theorem \ref{th:main_0}, we cannot apply Lemma \ref{le:intersection_number_m_m1_coprime} and Lemma \ref{le:intersection_number_linear_term} here to estimate the intersection numbers. To overcome this difficulty, we need to investigate  the branches centered at the singular points of the associated curve and apply local quadratic transformations to show the following result.

\begin{theorem}\label{th:main}
	Let $\lambda \in \F_{q^n}^*$ and $\varphi(X)=\sum _{j=1}^{k-1} \alpha_j X^{q^{j}}$. Let $\cC$ be the curve defined by $F(X,Y)=0$, where
	\begin{equation}\label{F(X,Y)}
	F(X,Y)=\frac{X^qY-XY^q+X^q\varphi(Y)-\varphi(X)Y^q+\lambda(X^qY^{q^k}-X^{q^k}Y^q)}{X^qY-XY^q}.
	\end{equation}
	If $k\geq3$, then $\cC$ has an absolutely irreducible $\F_{q^n}$-rational component. 
\end{theorem}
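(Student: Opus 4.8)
The plan is to argue by contradiction, following the same architecture as the proof of Theorem~\ref{th:main_0}: assume $\cC$ has no absolutely irreducible $\F_{q^n}$-rational component, write $F=AB$ where $A$ and $B$ are coprime products of absolutely irreducible factors of roughly balanced degree (using Lemma~\ref{le:splitting_of_irreducible_polys}), extract from B\'ezout's theorem a \emph{lower} bound of order $\tfrac{2}{9}(q^k-q)^2$ on the total intersection number $\sum_P I(P,\cA\cap\cB)$, and then derive a contradictory \emph{upper} bound by localizing at every singular point of $\cC$. The new difficulty, flagged in the text just before the statement, is that at the singular points $(\xi,\eta)$ with $\xi\eta\neq 0$ the tangent cone of $\widetilde F(X+\xi,Y+\eta)$ is $(\bar\eta X-\bar\xi Y)^{q}$ — a $q$-th power with $m=q$, not $m=q^i$ — but now the lowest-degree part plus the next homogeneous part need not satisfy the hypotheses of Lemma~\ref{le:intersection_number_m_m1_coprime} or Lemma~\ref{le:intersection_number_linear_term}, because $\varphi$ can contribute extra terms of degree between $q$ and $q^k$. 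So those lemmas are replaced by a branch analysis.

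First I would locate the points of $\cC$. As in Theorem~\ref{th:main_0}, homogenize and check the line at infinity: the ideal points are the roots of $\lambda\prod_{\rho\in\F_{q^k}\setminus\F_q}(Y-\rho X)$, which are pairwise distinct, so $\cC$ is smooth at infinity and (since $\gcd(k,n)=1$ can be assumed, the $\gcd>1$ case giving an $\F_{q^n}$-rational point at infinity directly) every ideal point lies on a single branch. The affine singular points lie in $\mathcal S=\{(\xi,\eta):f(\xi)=f(\eta)=0\}$ because $\partial\widetilde F/\partial X=f(X)$, $\partial\widetilde F/\partial Y=f(Y)$. Since $f(X)=X+\varphi(X)+\lambda X^{q^k}$ has nonzero linear coefficient, $\ker f$ is at most $q$-dimensional over $\F_q$; I would treat separately $(0,0)$, the points with exactly one coordinate zero, and the points with $\xi\eta\neq0$, of which there are only $O(q^2)$. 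At $(0,0)$ the tangent cone of $F$ is (up to scalar) $\tfrac{X^qY-XY^q}{XY-XY}$-type, i.e.\ the $q-1$ distinct lines $Y=\delta X$, $\delta\in\F_q^*$ — wait, more precisely $m_{(0,0)}(F)=q-1$ with all tangents distinct, so by Lemma~\ref{le:ordinary_singular} the contribution to $I((0,0),\cA\cap\cB)$ is at most $\lfloor(q-1)/2\rfloor\lceil(q-1)/2\rceil\le (q-1)^2/4$.

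The heart of the argument is bounding $I((\xi,\eta),\cA\cap\cB)$ at a singular point with $\xi\eta\neq0$ (and similarly at the one-coordinate-zero points). Here I would use Theorem~\ref{th:intersection_branch}: since $\cC$ has no $\F_{q^n}$-irreducible component, each $\F_{q^n}$-irreducible factor $W_i$ of $F$ splits into $s_i\geq2$ absolutely irreducible pieces; two distinct absolutely irreducible curves share no branch (Lemma~\ref{le:2_irre_curve_no_branch}), so $I(P,\cA\cap\cB)=\sum_\gamma\sum_\delta I(P,\gamma\cap\delta)$ over branches $\gamma$ of $\cA$ and $\delta$ of $\cB$ centered at $P$, and I must count branches and bound pairwise branch-intersection multiplicities. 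The key point is that the tangent cone at such $P$ is $L^q$ with $L=\bar\eta X-\bar\xi Y$; applying a coordinate change so $L$ becomes the $Y$-axis and then the local quadratic transformation $\sigma$ of \eqref{eq:def_local_trans} and Theorem~\ref{th:geometrc_transform}, one tracks how the $q$ tangent directions (all equal) resolve. Using Lemma~\ref{le:branch_(t,t+..)} to write branches as power series $Y=c_1X+\cdots$, I expect to show each such $P$ is the center of at most $q$ branches of $\cC$, each branch linear (order-one tangent), and that the total $I(P,\cA\cap\cB)$ is at most a constant times $q^2$ — concretely bounded by something like $q^2$ or $2q^2$ per point. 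Summing over the $O(q^2)$ such points and the $O(q)$ one-coordinate-zero points, and adding the $(q-1)^2/4$ from the origin, yields a total upper bound that is $O(q^4)$ while the lower bound from B\'ezout is $\tfrac{2}{9}(q^k-q)^2=\Omega(q^{2k})$; for $k\geq3$ these are incompatible, giving the contradiction.

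The main obstacle I anticipate is precisely the branch bookkeeping at the $\xi\eta\neq0$ singularities: because $\varphi$ contributes uncontrolled intermediate-degree terms, I cannot simply read off the branch structure from the tangent cone, and I may need \emph{iterated} quadratic transformations (not just one) to separate all branches, keeping careful track at each blow-up of which absolutely irreducible component each branch belongs to, so that the bound on $I(P,\gamma\cap\delta)$ for branches on \emph{different} components stays $O(q)$ each. Controlling that the number of blow-ups needed, and hence the accumulated intersection multiplicity, remains $O(q^2)$ per point — uniformly in $\varphi$ and in $k$ — is the delicate step; everything else (the infinity analysis, the $(0,0)$ computation, the B\'ezout lower bound, the final numerical comparison for $k\geq3$) is routine bookkeeping of the type already carried out in Theorem~\ref{th:main_0}.
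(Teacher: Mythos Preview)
Your high-level architecture (contradiction, B\'ezout lower bound $\ge\frac{2}{9}(\deg F)^2$, upper bound via branches and Theorem~\ref{th:intersection_branch}) is exactly the paper's. But your localization of the singular points is wrong, and this derails everything after it.

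Concretely: here $\widetilde F(X,Y)=X^qf(Y)-Y^qf(X)$ with $f(X)=X+\varphi(X)+\lambda X^{q^k}$, so $\partial\widetilde F/\partial X=-Y^q$ and $\partial\widetilde F/\partial Y=X^q$ (the linear term of $f$ survives, everything else is a $q$-th power). Thus the \emph{only} affine singular point of $\widetilde\cC$ is $(0,0)$, and since the lowest-degree part of $\widetilde F$ is $X^qY-XY^q$, we have $F(0,0)=1$, so $(0,0)\notin\cC$ at all. Your set $\mathcal S=\{(\xi,\eta):f(\xi)=f(\eta)=0\}$ is irrelevant, and there is no ``$(0,0)$ contribution'' to bound. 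Conversely, $\cC$ is \emph{not} smooth at infinity: the leading form of $\widetilde F$ is $\lambda(X^qY^{q^k}-X^{q^k}Y^q)=\lambda X^qY^q\prod_{\xi\in\F_{q^{k-1}}^*}(Y-\xi X)^q$, so after dividing by $X^qY-XY^q$ every ideal point has multiplicity $q-1$ or $q$. The singular locus of $\cC$ lies entirely on the line at infinity: the points $P=(1,0,0)$, $Q=(0,1,0)$, $S_\xi=(1,\xi,0)$ with $\xi\in\F_q^*$ (each of multiplicity $q-1$) and $R_\xi=(1,\xi,0)$ with $\xi\in\F_{q^{k-1}}\setminus\F_q$ (each of multiplicity $q$).

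The actual branch analysis is therefore carried out at infinity, and its content is quite different from what you sketch. At each $R_\xi$ one shows (via iterated local quadratic transformations, handling separately the cases according to which $\alpha_j(\xi^{q^j}-\xi^q)$ vanish) that there is a \emph{unique} branch of $\cC$, so by Lemma~\ref{le:2_irre_curve_no_branch} these points contribute $0$ to $I(P,\cA\cap\cB)$. At each of the $q+1$ points $P,Q,S_\xi$ one finds exactly $q-1$ branches, parametrized as $(t,\eta_it^N+\cdots)$ with $N=(q^k-1)/(q-1)$ and $\eta_i^{q-1}=1/\lambda$; the key estimate is that if a branch $\gamma_i$ lies on $\cY$ then $I(P,\cX\cap\gamma_i)\le mN$ where $m=m_P(\cX)$, giving $I(P,\cX\cap\cY)\le m(q-1-m)N\le\frac14(q-1)^2N$. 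Summing over the $q+1$ points yields $\frac14(q^2-1)(q^k-1)$, which is beaten by $\frac29(q^k-1)^2$ exactly when $k\ge3$. Note also $\deg F=q^k-1$, not $q^k-q$.
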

\begin{proof}
	Suppose, by way of contradiction, that $\cC$ has no absolutely irreducible components over $\F_{q^n}$. We divide our proof into four steps: first we find all the singular points of $\cC$ and determine the branches centered at them. Second we assume that $\cC$ splits into two components $\cX$ and $\cY$ sharing no common irreducible component, then we prove an upper bound on the total intersection number of $\cX$ and $\cY$. Then, under the assumption that $\cC$ has no absolutely irreducible components over $\F_{q^n}$, we decompose $F(X,Y) = A(X,Y) B(X,Y)$ and obtain a lower bound on $(\deg A)( \deg B)$. Finally, by using B\'ezout's theorem (see Theorem \ref{th:bezout}), we get a contradiction between the two bounds.
	
	\vspace*{2mm}
	\noindent\textbf{Step 1:}
	To determine the singular points of $\cC$, let us consider 
	\[\widetilde {F}(X,Y)=X^qY-XY^q+X^q\varphi(Y)-\varphi(X)Y^q+\lambda(X^qY^{q^k}-X^{q^k}Y^q)=(X^qY-XY^q)F(X,Y)\] 
	and let $\widetilde {\cC}$ denote the curve defined by $\widetilde {F}(X,Y)=0$.
	Also, let 
	\begin{align*}
		\widetilde {G}(X,Y,T)=&(X^qY-XY^q)T^{q^k-1}+ X^q\left(\sum _{j=1}^{k-1} \alpha_j Y^{q^{j}}T^{q^k-q^j}\right)\\
		 &-  Y^q\left(\sum _{j=1}^{k-1} \alpha_j X^{q^{j}}T^{q^k-q^j}\right) + \lambda(X^qY^{q^k}-X^{q^k}Y^q)
	\end{align*}
	be the homogenized polynomial of $\widetilde {F}(X,Y)$ with respect to $T$. 
	
	As
	\[ \tilde{F}(X+a, Y+b) = a^qY -Xb^q +X^qb - aY^q + \cdots,\]
	the curve $\widetilde{\cC}$ has only one affine singular point $(0,0)$ which does not belong to $\cC$. 
	
	By considering the zeros of 
	\[\widetilde {G}(X,1,0)= \lambda ( X^q-X^{q^k})\]
	and 
	\[\widetilde {G}(1,Y,0)=\lambda ( Y^{q^k}-Y^q),\]
	we see the points at infinity of $\cC$ are $P=(1,0,0)$, $Q=(0,1,0)$, $R_{\xi}=(1,\xi,0)$, $\xi \in \F_{q^{k-1}}\setminus \F_q$, and $S_{\xi}=(1,\xi,0)$, $\xi \in  \F_q^*$. The multiplicities of $P$, $Q$ and $S_{\xi}$ are $q-1$, and the multiplicities of $R_{\xi}$ are $q$.
	
	Now we determine the branches centered at these points contained in the curve $\cC$.
	
	\begin{itemize}
		\item In order to study the point $P=(1,0,0)$ we consider the polynomial
		\begin{align}
		\nonumber	
		&H(X,Y)=\widetilde {G}(1,Y,X)/Y= X^{q^k-1}-Y^{q-1}X^{q^k-1}\\
		\label{H(X,Y)}	& +  \sum_{j=1}^{k-1}\alpha_j Y^{q^j-1}X^{q^k-q^j}
		- Y^{q-1} \sum_{j=1}^{k-1}\alpha_jX^{q^k-q^j}+\lambda Y^{q^k-1}- \lambda Y^{q-1}.
		\end{align}
		The branches centered at $(0,0)$ belonging to the curve $\cH$ defined by $H(X,Y)=0$ correspond to the branches centered at $P$ and contained in  the curve $\cC$. We can immediately see that the origin is not an ordinary $(q-1)$-point of $\cH$. 
		
		As the tangent line at the origin is not the line $X=0$, we can apply the local quadratic transformation to $\cH$ and obtain a curve given by $H^{\prime}(X,Y)=H(X,XY)/X^{q-1}$. By calculation,
		\begin{align*}
			H^{\prime}(X,Y)=&X^{q^k-q}-Y^{q-1}X^{q^k-1}+
			\sum_{j=1}^{k-1}\alpha_j Y^{q^j-1}X^{q^k-q}\\
			&- Y^{q-1}\sum_{j=1}^{k-1}\alpha_jX^{q^k-q^j}+\lambda Y^{q^k-1}X^{q^k-q} -\lambda Y^{q-1}.
		\end{align*}
		Since the origin is still not an ordinary $(q-1)$-point of this new curve defined by $H'$ and the line $X=0$ is not the tangent line at the origin, we can apply again the local quadratic transformation. After applying it $N-1=(q^k-1)/(q-1)-1$ times, we obtain
		\begin{align*}
		H^{(N-1)}(X,Y)=&X^{q-1}-Y^{q-1}X^{q^k-1}
		+\sum_{j=1}^{k-1}\alpha_j Y^{q^j-1}X^{q^k-q + (N-1)(q^j-q)}\\
		&- Y^{q-1}\sum_{j=1}^{k-1}\alpha_j X^{q^k-q^j}
		+\lambda Y^{q^k-1}X^{(N-1)(q^k-q)} -\lambda Y^{q-1}.
		\end{align*}
		
		One more time of local quadratic transformation leads to
		\[1-Y^{q-1}X^{q^k-1}
		+\sum_{j=1}^{k-1}\alpha_j Y^{q^j-1}X^{q^k-q + N(q^j-q)}
		- Y^{q-1}\sum_{j=1}^{k-1} \alpha_jX^{q^k-q^j}
		+\lambda Y^{q^k-1}X^{N(q^k-q)} -\lambda Y^{q-1},\]
		which is denoted by $H^{(N)}(X,Y)$.
		We use $\cH^{(N)}$ to denote the curve defined by $H^{(N)}$. By Theorem \ref{th:geometrc_transform}, the branches of $\cH$ centered at the origin $(0,0)$ are mapped bijectively to the branches of $\cH^{(N)}$  centered at affine points on the line $X=0$, which are exactly the $q-1$ distinct simple points $(0,\eta_i)$, $i=1,\ldots,q-1$, where $\eta_i^{q-1}=1/\lambda$. 
		
		By Lemma \ref{le:branch_(t,t+..)}, the branch of $\cH^{(N)}$ centered at $(0, \eta_i)$ has a representation $(t, \eta_i + \sum^\infty_{j=1}c^{(i)}_jt^j )$ for certain $c^{(i)}_1, c^{(i)}_2, \ldots \in \F_{q^n}$. This means that the branches centered of $\cH$ at the origin $(0,0)$ have the representations
		$$\left(t, \eta_{i} t^N+ t^N\sum^\infty_{j=1}c^{(i)}_jt^j \right), \qquad i=1,\ldots,q-1.$$
		
		Using \eqref{H(X,Y)} it is possible to see that if $(t, \eta_{i} t^N+ct^{N+\beta}+\cdots)$ defines a branch of $\cH$, then plugging it into $YH(X,Y)$ we have
		\begin{align*}
		&(\eta_{i} t^N+ct^{N+\beta}+\cdots)H(t, \eta_{i} t^N+ct^{N+\beta}+\cdots)\\
		=&(\eta_{i} t^N+ct^{N+\beta}+\cdots)t^{N(q-1)}-(\eta_{i}^q t^{Nq}+c^qt^{Nq+\alpha q}+\cdots)t^{N(q-1)}\\
		&+\sum_{j=1}^{k-1}\alpha_j t^{q^k-q^j}\left((\eta_i^{q^j}t^{Nq^j} + c^{q^j} t^{Nq^j + \beta q^j} + \cdots)- (\eta_i^{q}t^{Nq} + c^{q} t^{Nq + \beta q} + \cdots) \right)\\
		&+\lambda (\eta_{i}^{q^k} t^{Nq^k}+c^{q^k}t^{Nq^k+\beta q^k}+\cdots)- \lambda (\eta_{i}^q t^{Nq}+c^qt^{Nq+\beta q}+\cdots)\\
		= &\eta_{i} t^{Nq}+ct^{Nq+\beta}+\cdots-\eta_{i}^q t^{2Nq-N}-c^qt^{2Nq-N +\beta q}+\cdots\\
		&+\sum_{j=1}^{k-1}(\alpha_j \eta_i^{q^j}t^{q^k+(N-1)q^j} + \alpha_j c^{q^j} t^{q^k+(N+\beta-1)q^j} + \cdots)\\
		&-\sum_{j=1}^{k-1} (\alpha_j \eta_i^{q}t^{q^k-q^j+Nq} + \alpha_j c^{q} t^{q^k-q^j +Nq + \beta q} + \cdots)\\
		&+\lambda \eta_{i}^{q^k} t^{Nq^k}+\lambda c^{q^k}t^{Nq^k+\beta q^k}+\cdots- \lambda \eta_{i}^q t^{Nq}-\lambda c^qt^{Nq+\beta q}-\cdots
		\end{align*}
		which should equals $0$. As the coefficients of $t^j$ must be $0$ for all $j$, we concentrate on the coefficient of $t^{Nq+\beta}$. 
		
		Set $\alpha_0=1$ and let $\overline{j}=\max\{j:\alpha_j\neq 0\}$. By calculation, we see that $\beta= q^k-q^{k-\overline{j}}$ and $c=\alpha_{\overline{j}}\eta_i^q$ which implies that
		$$(t, \eta_{i} t^N+\alpha_{\overline{j}}\eta_i^q t^{N+q^k-q^{k-\overline{j}}}+\cdots), \qquad i=1,\ldots,q-1$$
		represent the branches of $\cH$ centered at the origin $(0,0)$.

		\item The branches centered at $Q=(0,1,0)$ are exactly those represented by
		$$(\eta_{i} t^N+\alpha_{\overline{j}}\eta_i^q t^{N+q^k-q^{k-\overline{j}}}+\cdots,t), \qquad i=1,\ldots,q-1,$$
		since $F(X,Y)$ is symmetric for $X$ and $Y$.
		
		\item The branches centered at $S_{\xi}=(1,\xi,0)$, $\xi \in \F_{q}^*$ are exactly of type
		$$(t, \xi  t+ \eta_{i} t^N+\alpha_{\overline{j}}\eta_i^q t^{N+q^k-q^{k-\overline{j}}}+\cdots), \qquad i=1,\ldots,q-1,$$
		since the curve $\cC$ is fixed by $Y \mapsto Y+\xi X $.
		
		\item Finally we consider the points $R_{\xi}=(1,\xi,0)$, $\xi \in  \F_{q^{k-1}} \setminus \F_{q}$. Let us consider
		\begin{align*}
		L(X,Y)=&\widetilde {G}(1,Y+\xi,X)\\
		=&X^{q^k-1}Y+X^{q^k-1}(\xi-\xi^q) -Y^{q}X^{q^k-1}\\
		&+\sum_{j=1}^{k-1}\alpha_j (Y^{q^j}+\xi^{q^j}) X^{q^k-q^j} -(Y^q+\xi^q)\sum_{j=1}^{k-1}\alpha_j X^{q^k-q^j} \\
		&+\lambda Y^{q^k}- \lambda Y^{q}.
		\end{align*}
		As $-\lambda Y^q$ is the term of lowest degree in $L(X,Y)$, we see that $R_{\xi}=(1,\xi,0)$ are non-ordinary singular points of $\cC$ of multiplicity $q$. After the local quadratic transformation we obtain
		\begin{align*}
		L'(X,Y)=&L(X,XY)/X^q\\
		=&X^{q^k-q}Y+X^{q^k-q-1}(\xi-\xi^q) -Y^{q}X^{q^k-1}\\
		&+\sum_{j=1}^{k-1}\alpha_j Y^{q^j}  X^{q^k-q} -Y^q\sum_{j=1}^{k-1}\alpha_j X^{q^k-q^j} \\  
		&+\sum_{j=1}^{k-1}\alpha_j(\xi^{q^j}-\xi^q) X^{q^k-q^j-q}+\lambda Y^{q^k}X^{q^k-q}- \lambda Y^{q}.
		\end{align*}
		Let $\alpha_0=1$ and $j_0=\max\{j : \alpha_j(\xi^{q^j}-\xi^q)\neq 0 \}$. Now we have to compute the branches in two different cases.
		
		\textbf{Case 1. $j_0=0$:} By applying the local quadratic transformation to $L$ for $N=(q^k-q)/q=q^{k-1}-1$ times, we obtain
		\begin{align*}L^{(N)}(X,Y)=&X^{q^{k-1}+q-2}Y+X^{q-1}(\xi-\xi^q) +\sum_{j=2}^{k-1}\alpha_jY^{q^j}X^{q^k-q^j+N(q^j-q)}\\
		&-\sum_{j=2}^{k-1}\alpha_jY^{q}X^{q^k-q^j}-Y^{q}X^{q^k-1}+\lambda Y^{q^k}X^{(q^k-q)^2/q}- \lambda Y^{q}.\end{align*}
		Since now the tangent line at $(0,0)$ is $X=0$, we use the following quadratic transformation
		\begin{align*}
		\overline{L}(X,Y)=&L^{(q^{k-1}-1)}(XY,Y)/Y^{q-1}\\
		=&X^{q^{k-1}+q-2}Y^{q^{k-1}}+X^{q-1}(\xi-\xi^q) -Y^{q^k}X^{q^k-1}+\lambda Y^{q^{2k-1}-q^k+1}X^{(q^k-q)^2/q}- \lambda Y.
		\end{align*}
		Now $-\lambda Y$ is the term of lowest degree in $\overline{L}$, whence the origin is a simple point of it and therefore there exists a unique branch of $\cC$ centered at $R_{\xi}=(1,\xi,0)$,  $\xi \in  \F_{q^{k-1}} \setminus \F_{q}$.
		
		\textbf{Case 2. $j_0\neq 0$:} by applying the local quadratic transformation to $L$ for $N=(q^k-q^{j_0})/q-1=q^{k-1}-q^{j_0-1}-1$ times, we obtain
		\begin{align*}
		L^{(N)}(X,Y)=&X^{q^k-1-N(q-1)}Y+X^{q^k-1-Nq}(\xi-\xi^q) -Y^{q}X^{q^k-1}\\
		&+\sum_{j=1}^{j_0}\alpha_j Y^{q^j}  X^{q^k-q^j + N(q^j-q)} -Y^q\sum_{j=1}^{j_0}\alpha_j X^{q^k-q^j} \\  
		&+\sum_{j=1}^{j_0}\alpha_j(\xi^{q^j}-\xi^q) X^{q^k-q^j-Nq}+\lambda Y^{q^k}X^{N(q^k-q)}- \lambda Y^{q}.
		\end{align*}
		Now the terms of lowest degree is $\alpha_{j_0}(\xi^{q^{j_0}}-\xi^q)X^q-\lambda Y^q$. Let $Y_1 := \overline{\alpha}_{j_0}(\xi^{q^{j_0-1}}-\xi)X- \overline{\lambda} Y$, where $\overline{\alpha}_{j_0}^q=\alpha_{j_0}$ and $\overline{\lambda}^q=\lambda$. Rewrite $L^{(N)}(X,Y)$ as a polynomial $L_1(X, Y_1)$. In $L_1(X, Y_1)$, the possible terms are
		\begin{align*}
		&X^{q^k-N(q-1)}, \quad X^{q^k-1-N(q-1)}Y_1, \quad X^{q^k-1-Nq}, \quad X^{q^k+q-1}, \quad Y_1^{q}X^{q^k-1},\\
		&X^{q^k+ N(q^j-q)}, \quad Y_1^{q^j}X^{q^k-q^j + N(q^j-q)}, \quad X^{q^k-q^j+q}, \quad Y_1^qX^{q^k-q^j}, \quad X^{q^k-q^j-Nq},\\
		&X^{N(q^k-q)+q^k}, \quad Y_1^{q^k}X^{N(q^k-q)}, \quad Y_1^q,
		\end{align*}
		where $1\le j\leq j_0$. It is important to notice that 
		\begin{itemize}
			\item the coefficients of $X^{q^k-1-Nq}$ is still $\xi-\xi^q\neq 0$;
			\item the coefficient of $Y_1^q$ is nonzero;
			\item the terms of $X^j$ listed in increasing order of their exponents are
			\[ X^{q^k-q^{j_0-1}-Nq}, X^{q^k-q^{j_0-2}-Nq},\ldots, X^{q^k-q-Nq}, X^{q^k-1-Nq},\ldots .\]
		\end{itemize}
		Then we apply the local quadratic transformation on $L_1(X, Y_1)$ by taking $L_1'(X,XY_1)=L_1(X,XY_1)/X^q$. Again, let $j^{(1)}_0$ be the maximum integer $j$ such that the coefficient of $X^{q^k-q^j-Nq}$ is nonzero for $0\le j< j_0$. Clearly $j^{(1)}_0<j_0$. 
		
		If $j^{(1)}_0=0$, then we follow the steps in case 1 and prove the same results; otherwise after applying the same local quadratic transformation enough number of times (say $N_1$ times), we get $L_1^{(N_1)}(X, Y_1)$ in which $X^q$ and $Y_1^q$ are the terms of lowest degree again. Let us denote the sum of these two terms by $(u_1 X + v_1Y_1)^q$ for some $u_1,v_1\in \F_{q^n}^*$. Then we set $Y_2= u_1X + v_1Y_1$ and rewrite $L_1^{(N_1)}(X, Y_1)$ as a polynomial $L_2(X, Y_2)$. In $L_2$, we can still guarantee that
		\begin{itemize}
			\item the coefficients of $X^{q^k-1-(N+N_1)q}$ is still $\xi-\xi^q\neq 0$;
			\item the coefficient of $Y_2^q$ is nonzero;
			\item the terms of $X^j$ listed in increasing order of their exponents are
			\[ X^{q^k-q^{j^{(1)}_0-1}-(N+N_1)q}, X^{q^k-q^{j^{(1)}_0-2}-(N+N_1)q},\ldots, X^{q^k-q-(N+N_1)q}, X^{q^k-1-(N+N_1)q},\ldots .\]
		\end{itemize}
		Then we take $j^{(2)}_0$ be the maximum integer $j$ such that the coefficient of $X^{q^k-q^j-(N+N_1)q}$ is nonzero for $0\le j<j^{(1)}_0$. Again we consider two cases depending on whether $j^{(2)}_0=0$ or not. By doing all this process recursively, after finite steps (say $M$ steps), we have $j^{(M)}_0=0$ and end up with the calculation as in Case 1 and there exists a unique branch of $\cC$ centered at $R_{\xi}=(1,\xi,0)$,  $\xi \in  \F_{q^{k-1}} \setminus \F_{q}$.
	\end{itemize}
	
	\noindent\textbf{Step 2:} Suppose now that the curve $\cC$ splits into two components $\cX$ and $\cY$ sharing no common irreducible component. It follows that $\cX$ and $\cY$ have no branches in common. Their intersection points are clearly singular points for $\cC$. First of all, by Lemma \ref{le:2_irre_curve_no_branch} we observe that the multiplicity of intersection of $\cX$ and $\cY$ in one of the points $R_{\xi}=(1,\xi,0)$ is $I(R_{\xi}, \cX \cap \cY) =0$ since there is a unique branch of $\cC$ centered at each of these points.
	
	Next we consider the polynomial $H(X,Y)$ given by \eqref{H(X,Y)}. Let $U(X,Y)$ and $V(X,Y)$ be the two polynomials defining the components $\cX$ and $\cY$ such that $U$ and $V$ have no common factors. Then 
	\[U(X,Y)=Y^m+U_0(X,Y), \qquad V(X,Y)=Y^{q-1-m}+V_0(X,Y),\]
	for some $0\leq m\leq q-1$ and $U_0(X,Y),V_0(X,Y)\in \F_{q^n}[X,Y]$ satisfying $\deg U_0>m$ and $\deg V_0>q-1-m$. 
	
	Now let us look at the branches of $\cH$ centered at the point $P=(1,0,0)$. If a branch represented by $\gamma_{i}=(t, \eta_{i} t^N+\alpha_{\overline{j}}\eta_i^q t^{N+q^k-q^{k-\overline{j}}}+\cdots)$, $i=1,\ldots,q-1$ belongs to $\cX$, then in particular the coefficient of $t^{mN}$ in $U(\gamma_i)$ must vanish. This coefficient is given by 
	$$\eta_{i}^m+\sum_{j=0}^{m-1}\alpha_j \eta^{j},$$
	where $\alpha_j$ is the coefficient of $X^{N(m-j)}Y^{j}$ in $U(X,Y)$. Analogously, $\gamma_{i}\in \cY$ is equivalent to \[\eta_{i}^{q-1-m}+\sum_{j=0}^{q-m-2}\beta_j \eta^{j}=0,\]
	where $\beta_j$ is the coefficient of $X^{N(q-1-m-j)}Y^{j}$ in $V(X,Y)$. Therefore the number of branches contained in $\cX$ and $\cY$ is at most $m$ and $q-1-m$ respectively. On the other hand the total number of branches is $q-1$ whence
	$$\gamma_{i} \in \cX \iff \eta_{i}^m+\sum_{j=0}^{m-1}\alpha_j \eta^{j}=0, \qquad \gamma_{i} \in \cY \iff \eta_{i}^{q-1-m}+\sum_{j=0}^{q-m-2}\beta_j \eta^{j}=0.$$
	If $\gamma_i\notin \cX$ then $\cI(P,\cX\cap \gamma_i)\leq mN$, since the coefficient of $t^{mN}$ does not vanish; see Definition \ref{def:intersection_multi}.  Therefore, by Theorem \ref{th:intersection_branch}
	\[\cI(P,\cX\cap\cY)=\sum_{\gamma_{i} \in \cY} \cI(P,\cX\cap \gamma_i)\leq (q-1-m)mN\leq\frac{1}{4} (q-1)^2N=\frac{1}{4} (q-1)(q^k-1).\]
		
	For all the other points, $Q$ and $S_{\xi}$ the same argument holds. Therefore, for any two relatively prime components of $\cC$, the sum of their intersection number
	
	\begin{equation}\label{eq:sum_intersection_le}
	\sum_{P\in \cX\cap\cY}\cI(P,\cX\cap\cY)\le  \frac{1}{4} (q-1)(q^k-1)(q+1)=\frac{1}{4} (q^2-1)(q^k-1).
	\end{equation}
	
	\noindent\textbf{Step 3:} Assume that
	\[F(X,Y)=W_1(X,Y)W_{2}(X,Y)\cdots W_{r}(X,Y)\]
	is the decomposition of $F(X,Y)$ over $\F_{q^n}$ with $\deg W_i=d_i$ and $\sum_{i=1}^{r} d_i=q^k-1$. As there are $q^{k-1}+1$ singular points on $\cC$ for arbitrary $n$,  $W_i$ and $W_j$ must be relatively prime for any distinct $i$ and $j$.
	
	By Lemma  \ref{le:splitting_of_irreducible_polys}, there exist natural numbers $s_i$ such that $W_{i}$ splits into $s_i$ absolutely irreducible factors over $\overline{\F}_{q^n}$, each of degree $d_i/s_i$. Since we are assuming that $F(X,Y)$ does not have absolutely irreducible components defined over $\F_{q^n}$, we have $s_i>1$ for $i=1,2,\dots, r$. Define two polynomials $A(X,Y)$ and $B(X,Y)$ by
	\[A(X,Y)=\prod _{j=1}^{\lfloor s_i/2 \rfloor }  Z_i^j (X,Y), \qquad B(X,Y)=\prod _{j=\lceil s_i/2 \rceil}^{s_i }  Z_i^j (X,Y),\]
	where $Z_i^{1}(X,Y), \ldots, Z_{i}^{s_i}(X,Y)$ are the absolutely irreducible components of $W_{i}(X,Y)$. Let $\alpha$ and $\alpha+\beta$ be the degrees of $A(X,Y)$ and $B(X,Y)$ respectively. Then 
	$$2\alpha+\beta=q^k-1, \qquad \beta\leq \alpha, \qquad \beta \leq \frac{q^k-1}{3}.$$
	Let $\mathcal{A}$ and $\mathcal{B}$ be the curves defined by $A(X,Y)$ and $B(X,Y)$, respectively. It is clear that
	\[	(\deg A)( \deg B)=(\alpha+\beta)\alpha = \frac{(q^k-1)^2-\beta^2}{4} \ge \frac{2}{9}(q^k-1)^2.\]
	
	\noindent\textbf{Step 4:} By B\'ezout's Theorem (see Theorem \ref{th:bezout})
	\begin{equation}\label{eq:sum_intersection_ge}
	\sum_{P \in \mathcal{A}\cap \mathcal{B}}\cI(P, \mathcal{A} \cap \mathcal{B})=(\deg A)( \deg B) \ge \frac{2}{9}(q^k-1)^2.
	\end{equation}
	
	On the other hand, we have already obtained an upper bound on the total intersection number of any two coprime components of $\cC$ in \eqref{eq:sum_intersection_le}. Together with \eqref{eq:sum_intersection_ge}, we get
	
	\[ \frac{2}{9}(q^k-1)^2\le \sum_{P \in \mathcal{A}\cap \mathcal{B}}\cI(P, \mathcal{A} \cap \mathcal{B})\le \frac{1}{4} (q^2-1)(q^k-1),\]
	which never holds for $k\geq 3$. Hence we get a contradiction and finish the proof.
\end{proof}

\begin{theorem}\label{th:main_1_function}
	Let $\lambda \in \F_{q^n}^*$. Let $\cC$ be the curve defined as in \eqref{F(X,Y)}. If $3\leq k\leq n/4$,
	then $\cC$ has an  $\F_{q^n}$-rational point $(x,y)$ such that $\frac{x}{y} \notin \F_q$ and the function defined by \eqref{eq:f_main_1} is not a scattered polynomial.
\end{theorem}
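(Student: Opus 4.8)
The plan is to mirror the proof of Theorem~\ref{th:main_0_function}, now invoking Theorem~\ref{th:main} in place of Theorem~\ref{th:main_0}. Since $k\ge 3$, Theorem~\ref{th:main} provides an absolutely irreducible $\F_{q^n}$-rational component $\cC'$ of $\cC$, whose degree $d'$ is at most $\deg F = q^k-1$ (recall that $\widetilde F(X,Y)=X^qf(Y)-Y^qf(X)$ has degree $q^k+q$ and we divide by the degree-$(q+1)$ polynomial $X^qY-XY^q$). As $(d-1)(d-2)$ is non-decreasing in $d$, the Hasse--Weil Theorem (Theorem~\ref{th:HW-bound}) gives
\[
\#\cC(\F_{q^n})\ \ge\ \#\cC'(\F_{q^n})\ \ge\ q^n+1-(q^k-2)(q^k-3)\sqrt{q^n}.
\]

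Next I would separate off the points of $\cC$ that cannot serve as a witness: the points at infinity and the affine points with $x/y\in\F_q$. From Step~1 of the proof of Theorem~\ref{th:main}, $\cC$ meets the line at infinity exactly in $P$, $Q$, the points $R_\xi$ with $\xi\in\F_{q^{k-1}}\setminus\F_q$, and the points $S_\xi$ with $\xi\in\F_q^*$, for a total of $q^{k-1}+1$; in particular at most $q^{k-1}+1$ of them are $\F_{q^n}$-rational. For the affine part, the key point is that $\cC$ has \emph{no} affine point on any line $Y=\delta X$ with $\delta\in\F_q$. Indeed, writing $F(X,Y)=\bigl(X^qf(Y)-Y^qf(X)\bigr)/\bigl(X^qY-XY^q\bigr)$ and using that $f$ is $\F_q$-linear, for $\delta\in\F_q$ one computes $\widetilde F(X,\delta X+Z)=X^qf(Z)-Z^qf(X)$ and $X^q(\delta X+Z)-X(\delta X+Z)^q=X^qZ-XZ^q$, whence $F(X,\delta X+Z)=F(X,Z)$; since both numerator and denominator equal $X^qZ$ plus terms of order $\ge q$ in $Z$ (here $q\ge 2$), the quotient has constant term $1$ in $Z$, so $F(X,0)=1$ and hence $F(X,\delta X)=1\neq 0$. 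By symmetry of $F$ in $X,Y$ this also shows $\cC$ avoids the coordinate axes, so every $\F_{q^n}$-rational affine point $(x,y)$ of $\cC$ has $x,y\neq 0$ and $x/y\in\F_{q^n}\setminus\F_q$. (If one prefers to avoid this computation, it suffices, exactly as in Theorem~\ref{th:main_0_function}, to check that $F(X,\delta X)$ is not the zero polynomial and bound the number of such affine points by $q(q^k-1)$; the argument below still goes through.)

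It then remains to see that the count forces an $\F_{q^n}$-rational affine point of $\cC$, i.e.\ that
\[
q^n+1-(q^k-2)(q^k-3)\sqrt{q^n}\ >\ q^{k-1}+1 .
\]
Writing $(q^k-2)(q^k-3)=q^{2k}-5q^k+6$ and using $2k\le n/2$ (which is the hypothesis $k\le n/4$), we get $(q^k-2)(q^k-3)\sqrt{q^n}\le q^n-5q^{k+n/2}+6q^{n/2}$, so the left-hand side above is at least $1+q^{n/2}(5q^k-6)$; since $n\ge 4k\ge 12$ forces $q^{n/2}\ge q^{2k}>q^{k-1}$ and $k\ge 3$ forces $5q^k-6\ge 34$, this is clearly larger than $q^{k-1}+1$. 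Hence $\cC$ has an $\F_{q^n}$-rational affine point $(x,y)$ with $x/y\notin\F_q$ (equivalently $y/x\notin\F_q$), and by Lemma~\ref{le:link} the polynomial $f$ in \eqref{eq:f_main_1} is not scattered over $\F_{q^n}$.

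The substantive difficulty is entirely contained in Theorem~\ref{th:main} (the branch analysis and the B\'ezout argument there); in the present statement the only genuinely new point is ruling out the ``trivial'' affine intersections on the lines $Y=\delta X$, $\delta\in\F_q$, which the $\F_q$-linearity of $f$ disposes of cleanly, after which the Hasse--Weil count is routine.
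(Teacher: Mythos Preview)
Your proof is correct and follows essentially the same approach as the paper: invoke Theorem~\ref{th:main} for an absolutely irreducible $\F_{q^n}$-rational component, apply Hasse--Weil, subtract off the points at infinity and the ``bad'' affine points on lines $Y=\delta X$ with $\delta\in\F_q$, and check the resulting inequality under $k\le n/4$. The one substantive difference is that the paper merely verifies $F(X,\delta X)$ is not the zero polynomial (bounding the bad affine points by $q(q^k-1)$), whereas you exploit the $\F_q$-linearity of $f$ to show $F(X,\delta X)\equiv 1$, so there are no such points at all---a clean sharpening that also lets you use the slightly tighter Hasse--Weil constant $(q^k-2)(q^k-3)$ in place of the paper's $(q^k-1)(q^k-2)$.
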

\begin{proof}
	By Theorem \ref{th:main} we know that $\cC$ has an absolutely irreducible $\F_{q^n}$-component. By the Hasse-Weil Theorem (see Theorem \ref{th:HW-bound}) the number of $\F_{q^n}$-rational points of $\cC$ satisfies
	\[\#\cC(\F_{q^n})\geq q^n+1-(q^k-1)(q^k-2)\sqrt{q^n}.\]
	
	By plugging $Y=UX$ into $F(X,Y)$, we get
	\[F(X,UX)=1+\sum_{i=1}^{k-1}\alpha_i X^{q^i-1}\frac{U^{q^i}-U^q}{U-U^q} + \lambda X^{q^k-1} \frac{U^{q^k}-U^q}{U-U^q}.\]
	No matter which value $U$ takes, it can never be a zero polynomial. Hence a line cannot be a component of $\cC$.  Since the points $(x,y)$ of $\cC$ satisfying $\frac{x}{y} \in \F_q$ belong to lines $Y=\delta X$, $\delta \in \F_q$ and $\deg F=q^k-1$, there are at most $q(q^k-1)$ of such points.
	
	On the other hand, in Step 1 of the proof of Theorem \ref{th:main}, $\cC$ has exactly $q^{k-1}+1$ points at infinity. Therefore the existence of a suitable point $(x, y)$ satisfying $\frac{x}{y}\notin \F_{q}$ is ensured whenever 
	\[q^n+1-(q^k-1)(q^k-2)\sqrt{q^n}>q^{k-1}+1+q(q^k-1),\]
	which is implied by our assumptions on $k$ and $n$.
\end{proof}
From Lemma \ref{le:link}, and Theorem \ref{th:main_1_function}, we can prove the following result.
\begin{corollary}\label{coro:main_1_clear}
	The only exceptional scattered monic polynomials $f$ of index $1$ over $\F_{q^n}$ are $X$ and $bX + X^{q^2}$ where $b\in \F_{q^n}$ satisfying $\mathrm{Norm}_{q^n/q}(b)\neq 1$. In particular, when $q=2$, $f(X)$ must be $X$.
\end{corollary}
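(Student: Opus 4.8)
The plan is to combine Theorem \ref{th:main_1_function} with a field-descent argument of the same flavour as the one following Theorem \ref{th:main_0_function}, now for index $1$. First recall that after the normalization in the introduction, an exceptional scattered monic polynomial of index $1$ over $\F_{q^n}$ has the shape $f(X)=X+\varphi(X)+\lambda X^{q^k}$ with $\varphi(X)=\sum_{j=1}^{k-1}\alpha_j X^{q^j}$ and either $\lambda=0$ (in which case $f=X$, the trivial case) or $\lambda\neq0$. So assume $\lambda\neq0$; then $k\ge2$ is the degree of the leading term. The goal is to show $k=2$ and $\varphi=0$, and to pin down the norm condition on $\lambda$.

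The core step is to rule out $k\ge3$. Suppose $f$ is exceptional scattered of index $1$; then $U_m=\{(x^q,f(x)):x\in\F_{q^{mn}}\}$ is maximum scattered in $\PG(1,q^{mn})$ for infinitely many $m$. Fix such an $m$ large enough that $k\le mn/4$ — this is possible precisely because infinitely many such $m$ exist. Now view $f$ as a $q$-polynomial over $\F_{q^{mn}}$ and let $\cC$ be the curve attached to it as in \eqref{F(X,Y)}, but over $\F_{q^{mn}}$ (the polynomials $\varphi$, $\lambda$ are unchanged; only the ground field grows). By Theorem \ref{th:main_1_function} applied with $\F_{q^{mn}}$ in place of $\F_{q^n}$ and with $3\le k\le mn/4$, the curve $\cC$ has an $\F_{q^{mn}}$-rational affine point $(x,y)$ with $y/x\notin\F_q$. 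By Lemma \ref{le:link} this contradicts $U_m$ being maximum scattered. Hence $k\le 2$.

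It remains to analyse $k=2$, i.e.\ $f(X)=X+\lambda X^{q^2}$ with $\lambda\neq0$ (there is no intermediate term $\varphi$ when $k=2$). Here the curve in \eqref{eq:curve_condition} has degree $q^2-q$, too small for the Hasse--Weil machinery, so one argues directly: by Lemma \ref{le:link}, $f$ is scattered over $\F_{q^{mn}}$ iff $X+\lambda X^{q^2}$ has the property that $f(x)/x^q=f(y)/y^q$ forces $y/x\in\F_q$, which is the classical condition already known to be equivalent to $\mathrm{Norm}_{q^{mn}/q}(\lambda)\neq1$ for the Lunardon--Polverino polynomial $\delta x^{q^s}+x^{q^{n-s}}$ with $s=1$ written in normalized form; one just checks that $\mathrm{Norm}_{q^{mn}/q}(\lambda)\neq1$ for infinitely many $m$ holds iff $\mathrm{Norm}_{q^n/q}(\lambda)\neq1$ (using multiplicativity of the norm under tower extensions and that $\mathrm{Norm}_{q^{mn}/q}(\lambda)=\mathrm{Norm}_{q^n/q}(\lambda)^{(q^{mn}-1)/(q^n-1)}$ for $\lambda\in\F_{q^n}$), so the norm condition $\mathrm{Norm}_{q^n/q}(\lambda)\neq1$ is forced and is sufficient. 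Relabelling $\lambda$ as $b$ gives the family $bX+X^{q^2}$ (the roles of leading and lowest term are swapped by the monic normalization, which is harmless). Finally, when $q=2$, the condition $\mathrm{Norm}_{2^n/2}(b)\neq1$ means $b$ lies outside the image of the norm map $\F_{2^n}^*\to\F_2^*$; but that norm map is surjective onto $\F_2^*=\{1\}$, so no such $b\neq0$ exists, leaving only $f=X$.

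The step I expect to be the only real obstacle is the bookkeeping in the $k=2$ case: one must be careful that the normalization conventions (monic, lowest-degree coefficient $1$, coefficient of $X^{q^t}$ zero) are applied consistently so that $X+\lambda X^{q^2}$ and $bX+X^{q^2}$ genuinely describe the same linear set, and that the norm condition transfers correctly between $\F_{q^n}$ and all the extensions $\F_{q^{mn}}$; everything else is a direct citation of Theorem \ref{th:main_1_function}, Lemma \ref{le:link}, and the known classification of the $\delta x^{q^s}+x^{q^{n-s}}$ family.
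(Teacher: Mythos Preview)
Your reduction to $k\le 2$ via Theorem~\ref{th:main_1_function} applied over $\F_{q^{mn}}$ for large $m$ is exactly what the paper does, and your treatment of the $q=2$ subcase is the same.

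The genuine difference lies in the $k=2$ analysis. You invoke as ``classical'' the equivalence between scatteredness of $bX+X^{q^2}$ (index $1$) over $\F_{q^N}$ and the condition $\mathrm{Norm}_{q^N/q}(b)\neq 1$, citing the Lunardon--Polverino/Sheekey family, and then transfer this along the tower $\F_{q^n}\subset\F_{q^{mn}}$ via the norm formula $\mathrm{Norm}_{q^{mn}/q}(b)=\mathrm{Norm}_{q^n/q}(b)^{(q^{mn}-1)/(q^n-1)}\equiv\mathrm{Norm}_{q^n/q}(b)^{m}$ in $\F_q^*$. This is correct: the equivalence is established in the cited literature, and your transitivity computation is fine. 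The paper, by contrast, does not rely on that equivalence but proves the \emph{necessity} of the norm condition from scratch: if $\mathrm{Norm}_{q^n/q}(b)=1$, it writes $b=\alpha^{q-1}$ with $\alpha=uv^q-vu^q$ for suitable $u,v\in\F_{q^n}^*$, and then the Moore determinant built from $u,v$ produces an explicit polynomial $bX+aX^q+X^{q^2}$ with $q^2$ roots, violating scatteredness. The key surjectivity $\{uv^q-vu^q:u,v\in\F_{q^n}^*\}=\F_{q^n}$ is itself obtained via a Hasse--Weil count on the auxiliary curve $Y-Y^q=\alpha X^{q+1}$ (with separate elementary arguments for $n=3$). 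So the paper's route is longer but fully self-contained, while yours is shorter and leans on the literature; both are valid and reach the same conclusion. Your anticipated ``obstacle'' (the bookkeeping in the $k=2$ case) is precisely where the paper invests its effort.
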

\begin{proof}
	First we show that $\mathrm{Norm}_{q^n/q}(b)\neq 1$ is a necessary condition for $bX + aX^{q} + X^{q^2}$ having at most $q$ zeros for every $a\in \F_{q^n}$.
	
	Clearly there exist polynomials of degree $q^2$ having exactly $q^2$ zeros in $\mathbb{F}_{q^n}$. Let ${u,v}$ be an $\F_q$-basis of an arbitrary $2$-dimensional $\F_q$-subspace $W$ of $\F_{q^n}$. Define a linearized polynomial $\overline{g}$ of degree $q^2$ by
	\[\overline{g}(X)=  \det \left(
	\begin{matrix}
	X & X^q & X^{q^2} \\ 
	u & u^q & u^{q^2} \\ 
	v & v^q & v^{q^2}
	\end{matrix} \right)= \alpha X^{q^2} + \beta X^q + \alpha^q X,
	\]
	where $\alpha=uv^q-vu^q$ and $\beta=u^{q^2}v-v^{q^2}u$. Hence the polynomial
	\[ g(X)= \overline{g}(X)/\alpha= X^{q^2}+\frac{\beta}{\alpha} X^q + \alpha^{q-1}X\]
	has exactly $q^2$ zeros. As every element in $\{c: \mathrm{Norm}_{q^{n}/q}(c)=1\}$ can be written as $\gamma^{q-1}$ for some $\gamma\in \F_{q^n}^*$, if we can show that
	\begin{equation}\label{eq:alpha=all}
		\{uv^q-vu^q: u,v\in\F_{q^n}^*  \}= \F_{q^n},
	\end{equation}
	then for any $b\in \F_{q^n}$ satisfying $\mathrm{Norm}_{q^{n}/q}(b)=1$, we can find $\alpha=uv^q-vu^q$ such that $b=\alpha^{q-1}$, whence there exists $a=\frac{\beta}{\alpha}$ such that the polynomial $X^{q^2}+a X^q + bX$ has $q^2$ roots.
	
	To prove \eqref{eq:alpha=all}, it is equivalent to show that for given $\alpha\in \F_{q^n}^*$, there exist $u$ and $v$ such that 
	\[\frac{u}{v} - \left(\frac{u}{v}\right)^q = \frac{\alpha}{v^{q+1}}.\]
	We consider the curve $\cC$ defined by
	\[Y-Y^q = \alpha X^{q+1}.\]
	It is clear that $\cC$ is absolutely irreducible. By Hasse-Weil bound, it has at least $q^n+1- q(q-1)\sqrt{q^n}$ rational points in $\PG(2, q^n)$. As $\cC$ only has one point $(0,1,0)$ at infinity, the total number of affine points is at least
	\[q^n-q(q-1)q^{n/2},\]
	which is always larger than $0$ if $n\ge 4$. Hence \eqref{eq:alpha=all} holds for $n\ge 4$.
	
	It is obvious that for $n=1,2$ the original question is meaningless. When $n=3$ and $q$ is even, it is easy to see $\gcd(q+1, q^n-1)=1$ which means the map $v\mapsto v^{q+1}$ is bijective. Thus, for every $\alpha\in \F_{q^n}$, we can fix the value of $u/v$ and choose $v$ such that 
	\[\alpha=v^{q+1}\left(\frac{u}{v} - \left(\frac{u}{v}\right)^q\right).\]

	When $n=3$ and $q$ is odd, we have $\gcd(q+1, q^n-1)=2$. Hence, by taking a non-square element $\gamma\in \F_q$, we have
	\[\left\{\frac{\alpha} {v^{q+1}}: v\in \F_{q^n}^*  \right\} 
	\cup \left\{\frac{\alpha \gamma} {v^{q+1}}: v\in \F_{q^n}^*  \right\}=\F_{q^n}^*. \]
	Hence,  for every $\alpha\in \F_{q^n}$, we can fix the value of $u/v$ and find $v$ such that 
	\[\alpha=v^{q+1}\left(\frac{u}{v} - \left(\frac{u}{v}\right)^q\right) ~\text{ or }~ \frac{v^{q+1}}{\gamma}\left(\frac{u}{v} - \left(\frac{u}{v}\right)^q\right).\]
	If the first case happens, we are done; otherwise we replace the value of $u/v$ by $u/(v\gamma)$ to get
	\[\alpha =v^{q+1}\left(\frac{u}{v\gamma} - \left(\frac{u}{v\gamma }\right)^q\right).\]
	Thus \eqref{eq:alpha=all} holds for $n=3$ and we finish the proof that $\mathrm{Norm}_{q^n/q}(b)\neq 1$ is a necessary condition for $bX + aX^{q} + X^{q^2}$ having at most $q$ zeros for every $a\in \F_{q^n}$.
	
	Next let us prove the corollary. Assume that $f$ is an exceptional scattered polynomial of index $1$ and degree $q^k$. By Theorem \ref{th:main_1_function}, when $n$ is large enough, we must have $k=2$ whence $f(X)=bX + X^{q^2}$ (Recall that we assume the coefficient of $X^{q^t}$ is always zero, where $t$ denotes the index of the exceptional scattered polynomials). By the first part of this proof, $\mathrm{Norm}_{q^n/q}(b)\neq 1$. In particular, when $q=2$, $b$ has to be $0$. As we always assume that the coefficient of $X$ in $f$ is nonzero, the only exceptional scattered polynomial of index $1$ for $q=2$ is $X$.
\end{proof}

\section{Concluding remarks}
In this paper, we have obtained a complete classification of exceptional scattered polynomial of index $1$. For those of index $0$, our classification results are complete for $q>5$. For $q=2,3,4,5$, Corollary \ref{coro:main_0_clear} shows that an exceptional scattered polynomial of index $0$ and degree $q^k$ have at most two extra terms $X^{q^{k-2}}$ and $X^{q^{k-1}}$. We leave the classification for $q=2,3,4,5$ as an open question.

For $t>1$, in general we do not have any complete classification result. However, by \cite[Theorem 10]{gow_galois_linear_2009} or \cite[Theorem 3.2]{lunardon_generalized_2015}, the classification for $t=1$ in Corollary \ref{coro:main_1_clear} implies that, for $s$ satisfying $\gcd(s,n)=1$, the $q^s$-polynomial $f=\sum_{j=0}^{k} a_i X^{q^{is}}\in \F_{q^n}[X]$ is an exceptional scattered polynomial of index $s$ if and only if $k\le 2$.

To obtain a classification result of exceptional scattered polynomials of index $t>1$, if we follow the approach in the proofs of Theorem \ref{th:main_0} and Theorem \ref{th:main}, the calculation becomes more involved and we also have to face the case in which $t>k$, where $q^k$ is the degree of $f$. It appears that our approach cannot give us a complete classification for these cases. Hence we leave the classification of exceptional scattered polynomials of index $t>1$ as the second open question.

\section*{Acknowledgment}
Yue Zhou is partially supported by the National Natural Science Foundation of China (No.\ 11401579). The work of Daniele Bartoli was supported in part by Ministry for Education, University and Research of Italy (MIUR) (Project PRIN 2012 ``Geometrie di Galois e strutture di incidenza'') and by the Italian National Group for Algebraic and Geometric Structures and their Applications (GNSAGA - INdAM).

%
\end{document}